\setlist[enumerate]{leftmargin=24pt, align=right, labelwidth=24pt,  widest=(iii)}
\numberwithin{equation}{section}
\theoremstyle{plain}
\newtheorem{theorem}{Theorem}[section]
\theoremstyle{definition}
\begin{document}
\title[Radius of starlikeness]{Starlikeness of Certain  Non-Univalent Functions }

\dedicatory{Dedicated to Prof.\ Dato' Indera Rosihan M. Ali}

\author[A. Lecko]{Adam Lecko}
\address{Department of Complex Analysis,  Faculty of Mathematics and Computer Science,  University of Warmia and Mazury in Olsztyn,  ul. S\l oneczna 54,  10-710,  Olsztyn,  Poland}
\email{alecko@matman.uwm.edu.pl}

\author{V. Ravichandran}
\address{Department of Mathematics \\National Institute of Technology\\Tiruchirappalli-620015,   India }
\email{vravi68@gmail.com; ravic@nitt.edu}

\author[A. Sebastian]{Asha Sebastian}
\address{Department of Mathematics \\National Institute of Technology\\Tiruchirappalli-620015,   India }
\email{ashanitt18@gmail.com}

\begin{abstract}We consider three classes of  functions defined using the class $\mathcal{P}$   of all analytic functions $p(z)=1+cz+\dotsb$ on the open  unit disk having positive real part  and study several radius problems for these classes. The first class consists of all normalized analytic functions $f$  with $f/g\in\mathcal{P}$ and $g/(zp)\in\mathcal{P}$ for some normalized analytic function  $g$ and   $p\in \mathcal{P}$. The second class is defined by replacing the condition $f/g\in\mathcal{P}$ by $|(f/g)-1|<1$ while the other class consists of normalized analytic functions  $f$ with $f/(zp)\in\mathcal{P}$ for some $p\in \mathcal{P}$. We have determined radii so that the functions in these classes  to belong to various subclasses of starlike functions. These subclasses includes the classes of starlike functions of order $\alpha$,  parabolic starlike functions,   as well as the classes of  starlike functions associated with lemniscate of Bernoulli,  reverse lemniscate,  sine function,  a rational function,  cardioid,  lune,  nephroid and modified sigmoid function.
\end{abstract}

\subjclass{30C80,   30C45}

\keywords{Univalent functions;    convex functions;  starlike functions; subordination; radius of starlikeness}

\thanks{Asha Sebastian  is supported by  an  institute  fellowship from NIT Tiruchirappalli.}

\maketitle

\section{Introduction}Let $\mathbb{D}$ denote the open unit disc in $
\mathbb{C}$.  Let $\mathcal{A}:=\{f: f  \text{ is analytic on } \mathbb{D},  f(0)= 0,  \text{ and } f'(0)=1\} $. Let $\mathcal{S}:=\{f\in \mathcal{A}: f \text{ is univalent on } \mathbb{D}\}$.   An analytic function $f$ is subordinate to another analytic function $g$,   written $f\prec g$,  if $f(z)=g(w(z))$ for some analytic function $w:\mathbb{D}\to \mathbb{D}$ that fixes the origin. If $g \in \mathcal{S}$,  then $f\prec g$ if and only if the functions $f$ and $g$ takes the origin to the same point as well as the range of $f$ is a subset of the range of $g$: $f(\mathbb{D})\subseteq g(\mathbb{D})$. Several well known subclasses of starlike and convex functions were characterized by  subordination of  $zf'(z)/f(z)$ or $1+(zf''(z)/f'(z)$ to some function in $\mathcal{P}$. For a univalent $\varphi$ in unit disc $\mathbb{D}$ with $\operatorname{Re} \varphi(z)>0$,  starlike with respect to $\varphi(0)=1$,  symmetric about the real axis and $\varphi'(0)>0$,  Ma and Minda  \cite{MR1343506} gave a unified treatment for functions in the class $\mathcal{S}^*(\varphi)=\{f \in\mathcal{A}: zf'(z)/f(z)\prec \varphi(z)\}$ and $\mathcal{K}(\varphi)=\{f \in\mathcal{A}: 1+(zf''(z)/f'(z))\prec \varphi(z)\}$. Convolution theorems for these two classes in a more general setting was previously studied by Shanmugham  \cite{MR994916} with the stronger assumption that $\varphi$ is convex.  Several authors considered these classes for various choices of the function $\varphi$. For  $-1\leq B<A \leq 1$,  let $\varphi$ be the bilinear transform that maps the unit disc $\mathbb{D}$ onto the disc whose diametric end points are $(1+A)/(1+B)$ and $(1-A)/(1-B)$; if we impose $\varphi(0)=1$,  then this mapping is given by  $\varphi(z)=(1+Az)/(1+Bz)$.  For this function $\varphi$,  the classes $\mathcal{S}^*(\varphi)$ and $\mathcal{K}(\varphi)$ reduce respectively to the classes $\mathcal{S}^*[A, B]$ and $\mathcal{K}[A, B]$ of  Janowski  starlike and convex functions. Other well-known choices   for $\varphi(z)$ include $\sqrt{1+z}, \ e^z, \ 1+\sin z, \ z+\sqrt{1+z^2}$. Readers may refer to  \cite{MR704184, MR1232424} for brief survey of these classes.

The class $\mathcal{P}$  of analytic functions $p(z)=1+cz+\dotsb$ on $\mathbb{D}$ having positive real part is known as the class of Carath\'{e}odory  functions or the class of functions with positive real part. For $0\leq \alpha<1$,  let $\mathcal{P}(\alpha):=\{p \in \mathcal{P} : \operatorname{Re} p(z)>\alpha\}$.  For any two subclasses $\mathcal{F}$ and $\mathcal{G}$ of $\mathcal{A}$,  the $\mathcal{G}-$radius for the class $\mathcal{F}$,  denoted by $R_{\mathcal{G}}(\mathcal{F})$,  is the largest number $R_{\mathcal{G}}\in (0, 1)$ such that $r^{-1}f(rz)\in \mathcal{G}$,  for all $f\in \mathcal{F}$ and for $0<r<R_{\mathcal{G}}$. Among the several studies available on radius problems,  a major focus has been on ratio between two classes of functions,  where one of them belong to some particular subclasses of $\mathcal{A}$ and can be seen in  \cite{MR150282, MR148891, MR158985, MR236373, MR257341}. Let $\Phi$ be a function defined on $\mathbb{D}$. For $\Phi(z)=1,  1/(1+z),  1/(1-z)^2,  1/(1-z^2),  1+(z/2)$,  various authors have considered radius problems for classes of functions $f$ satisfying the following conditions: (i) $\operatorname{Re}f(z)/g(z)> 0 $ and $\operatorname{Re} g(z)/(z\Phi(z))> 0$,  (ii) $\left|(f(z)/g(z))-1\right|< 1$ and $\operatorname{Re} g(z)/(z\Phi(z)) > 0$,  (iii) $\operatorname{Re} f(z)/(z\Phi(z)) > 0$. See \cite{MR2989285, sebastian2020radius, kanaga2020starlikeness, khatter2020radius, el2020starlikeness}. Also,  in 2019,  Lecko and Sim \cite{MR3947299} considered two  functions $\Phi(z)= 1/(1-z)^2,  1/(1-z^2)$ where $z\Phi(z)$ are starlike functions with integer coefficients and studied on $\operatorname{Re} f(z)/(z\Phi(z)) > 0$. Similar studies can be seen in  \cite{MR3220311,  MR3222198,  MR3837436}.

Motivated by the aforestated works,  we define three classes of functions by making use of the class  $\mathcal{P}$ as follows:
\begin{align}\label{G1}
\mathcal{G}_1&:=\left\{f\in \mathcal{A}: \frac{f}{g}\in \mathcal{P},
\frac{g}{z p}\in \mathcal{P} \text{ for some } g \in \mathcal{A},
p\in \mathcal{P}\right\}, \\
\label{G2}
\mathcal{G}_2&:=\left\{f\in \mathcal{A}: \frac{f}{g}\in \mathcal{P},
\frac{g}{z p}\in \mathcal{P} (1/2) \text{ for some }
g \in \mathcal{A},  p\in \mathcal{P}\right\},
\shortintertext{and}
\label{G3}
\mathcal{G}_3&:=\left\{f\in \mathcal{A}:\frac{f}{z p}\in \mathcal{P}
\text{ for some } p\in \mathcal{P}\right\}.
\end{align}
We determine radii for functions in the classes $\mathcal{G}_1, \mathcal{G}_2, \mathcal{G}_3$ to belong to several subclasses of $\mathcal{A}$ like starlike functions of order $\alpha$,  starlike functions associated with lemniscate of Bernoulli,  reverse lemniscate,  sine function,  exponential function,  cardioid,  lune,  nephroid,  a particular rational function,  modified sigmoid function and parabolic starlike functions. The disc that contains the image of unit disc $\mathbb{D}$ under the mapping $zf'(z)/f(z)$ aids in determining the radius of various classes and we discuss this mapping in the following section.

\section{Mapping of $zf'(z)/f(z)$ for $\mathcal{G}_1$,  $\mathcal{G}_2$,  $\mathcal{G}_3$} In this section,  we show that the classes $\mathcal{G}_1, \mathcal{G}_2, \mathcal{G}_3$ are non-empty and contains non-univalent functions. We also determine the disk containing the image of the disc $\mathbb{D}$ under the function $zf'/f$ when $f$ belong to each of the classes. We shall use the function $p_0:\mathbb{D}\to\mathbb{C}$ defined by \begin{equation} \label{p}
p_0(z)=\frac{1+z}{1-z}.
\end{equation} This function $p_0$  belongs to $\mathcal{P}$ and  maps the unit disc $\mathbb{D}$ onto the right half-plane.  For functions $p \in \mathcal{P}(\alpha)$,  we shall use the following inequality  \cite[Lemma 2]{MR313493}:
\begin{equation}\label{shah}
\left|\frac{zp'(z)}{p(z)}\right|\leq \frac{2(1-\alpha)r}{(1-r)(1+(1-2\alpha)r)}, \quad |z|\leq r.
\end{equation}

\subsection{The class $\mathcal{G}_1$}Define the functions $f_1, g_1:\mathbb{D}\longrightarrow\mathbb{C}$ by
\begin{equation}\label{f1}
f_1(z)=z\left(\frac{1+z}{1-z}\right)^3,
\quad \text{and} \quad
g_1(z)=z\left(\frac{1+z}{1-z}\right)^2.
\end{equation}
The function $f_1$ satisfies $f_1/g_1 \in \mathcal{P}$ and $g_1/zp_0 \in \mathcal{P}$ where $p_0\in \mathcal{P}$ is given in \eqref{p}. Thus the function $f_1\in \mathcal{G}_1$ and the class $\mathcal{G}_1$ is non-empty. It is  an extremal function for the class $\mathcal{G}_1$. From the  coefficients of Taylor series expansion of functions $f_1$  given by
\[f_1(z)=z+6z^2+18z^3+38z^4+\dots, \] it is evident  that the functions $f_1 $ is not univalent. Hence the class  $\mathcal{G}_1$,  contain non-univalent functions. As \[f_1'(z)=\frac{(1+z)^2(1+6z-z^2)}{(1-z)^4}\] the function  $f_1'$ vanishes at $z=-\rho=-(\sqrt{10}-3)$.  From Theorem \ref{starlike},  it is apparent that the radii of univalence of the class  $\mathcal{G}_1$ is $\sqrt{10}-3$ and it  coincides with its radius of starlikeness.

If the function $f\in \mathcal{G}_1$,  then there exists an element $g \in \mathcal{A}$ and $p\in \mathcal{P}$ such that
\begin{equation}\label{a1}
	\frac{f}{g}\in \mathcal{P} \quad \text{and}\quad \frac{g}{z p}\in \mathcal{P}.
\end{equation}
Let $p_1, \ p_2$ be two functions defined on unit disc $\mathbb{D}$ by
\begin{equation}\label{a2}
	p_1(z)=\frac{f(z)}{g(z)} \quad \text{and}\quad p_2(z)=\frac{g(z)}{zp(z)}.
\end{equation}
From \eqref{a1} and \eqref{a2},  we have $f(z)=zp_1(z)p_2(z)p(z)$. Then a calculation shows that \begin{equation}\label{d1}
	\left|\frac{zf'(z)}{f(z)}-1\right| \leq \left|\frac{zp_1'(z)}{p_1(z)}\right|+\left|\frac{zp_2'(z)}{p_2(z)}\right|
+\left|\frac{zp'(z)}{p(z)}\right|.
\end{equation}
Since $p_1, \ p_2, \ p \in \mathcal{P}$,  it follows  from   \eqref{shah} and \eqref{d1} that
\begin{equation}\label{disc1}
	\left|\frac{zf'(z)}{f(z)}-1\right| \leq \frac{6r}{1-r^2},  \quad |z|\leq r.
\end{equation}
\subsection{The class $\mathcal{G}_2$}\label{sec2.2}  The functions $f_2$ and $g_2$ are defined on the unit disc $\mathbb{D}$ as follows:
\begin{equation}\label{f2}
f_2(z)=z\frac{(1+z)^2}{(1-z)^3} \quad\text{and}\quad g_2(z)=z\frac{(1+z)}{(1-z)^2}
\end{equation}
The function $f_2$ satisfies $f_2/g_2 \in \mathcal{P}$ and $g_2/(zp_0) \in \mathcal{P}(1/2)$ where $p_0\in \mathcal{P}$ is given in \eqref{p}. Thus the function $f_2\in \mathcal{G}_2$ and the class $\mathcal{G}_2$ is non empty. It is  an extremal function for the class $\mathcal{G}_2$. Note that the Taylor series expansion of \[f_2(z)=z+5z^2+13z^3+25z^4+\dots\] and it is clear that the function $f_2$ is not univalent function,  as it does not satisfy de Brange's theorem. Thus the class $\mathcal{G}_2$ contain non univalent functions. Since \[f_2'(z)=\frac{1+6z+5z^2}{(1-z)^4}, \] the function $f_2'$ vanishes at $z=-\zeta=- 1/ 5$ and it follows that the radius of univalence of the class of functions $\mathcal{G}_2$ is  1/5 by the Theorem \ref{starlike}. It also coincides with radius of starlikeness.

If the function $f \in \mathcal{G}_2$,  then there exists a normalized  analytic function $g$ and $p\in \mathcal{P}$ such that \begin{equation}\label{b1}
\frac{f}{g} \in \mathcal{P} \quad \text{and}\quad \frac{g}{zp}\in \mathcal{P}\left(1/2\right).
\end{equation}
Let $h_1$ and $h_2$ be two functions,  $h_1, h_2:\mathbb{D}\longrightarrow\mathbb{C}$ defined as \begin{equation}\label{b2}
h_1(z)=\frac{f(z)}{g(z)} \quad \text{and}\quad h_2(z)=\frac{g(z)}{zp(z)}.
\end{equation}
Using \eqref{b1} and \eqref{b2},  we have $f(z)=zh_1(z)h_2(z)p(z)$,  it can be shown that \begin{equation}\label{d2}
 \frac{zf'(z)}{f(z)}-1=  \frac{zh_1'(z)}{h_1(z)}+\frac{zh_2'(z)}{h_2(z)}+\frac{zp'(z)}{p(z)}.
\end{equation}
As $h_1,  p \in \mathcal{P}$ and $h_2 \in \mathcal{P}(1/2)$,  it follows from \eqref{shah} and \eqref{d2} that
\begin{equation}\label{disc2}
\left|\frac{zf'(z)}{f(z)}-1\right| \leq \frac{r(r+5)}{1-r^2},  \quad |z|\leq r.
\end{equation}
It would be interesting to find the boundary of the set $ \cup_{f\in \mathcal{G}_2}\{zf'(z)/f(z): |z|\leq r\}$. This would help in finding
sharp radii to some of the problem where we have got only a lower bound.

\subsection{The class $\mathcal{G}_3$}
The function $g_1$,  defined by \eqref{f1},  belongs to the class $\mathcal{G}_3$  and  therefore the class $\mathcal{G}_3$ is non-empty. It is  an extremal function for the class $\mathcal{G}_3$. From the  coefficients of Taylor series expansion of functions $g_1$  given by
\[g_1(z)=z+4z^2+8z^3+12z^4+\dots\] it is evident  that the functions $g_1 $ is not univalent. Hence the class  $\mathcal{G}_3$,  contain non-univalent functions. As\[g_1'(z)=\frac{1+5z+3z^2-z^3}{(1-z)^3}, \]   the function  $g_1'$ vanishes at $z=-\eta=-(\sqrt{5}-2)$.  From Theorem \ref{starlike},  it is apparent that the radii of univalence of the class  $\mathcal{G}_3$ coincide with its radius of starlikeness.
We now discuss the mapping of $zf'(z)/f(z)$ when the function $f \in \mathcal{G}_3$. Let $p\in \mathcal{P}$ and $p_1$ be a function defined on unit disc $\mathbb{D}$ such that \begin{equation}\label{c1}
	p_1(z)=\frac{f(z)}{zp(z)}
\end{equation}
It is clear from \eqref{c1} that $f(z)=zp_1(z)p(z)$. Then it follows that  \begin{equation}\label{d3}
\left|\frac{zf'(z)}{f(z)}-1\right| \leq \left|\frac{zp_1'(z)}{p_1(z)}\right|+\left|\frac{zp'(z)}{p(z)}\right|.
\end{equation}
Applying \eqref{shah} in \eqref{d3},  we obtain \begin{equation}\label{disc3}
\left|\frac{zf'(z)}{f(z)}-1\right| \leq \frac{4r}{1-r^2},  \quad |z|\leq r.
\end{equation}

Using the equations \eqref{disc1},  \eqref{disc2},  \eqref{disc3},  we investigate several  radius problems associated with functions in the classes $\mathcal{G}_1$,   $\mathcal{G}_2$ and  $\mathcal{G}_3$ in the next section.

\section{Radius of starlikeness}
In this section,  we determine the radii of the classes $\mathcal{G}_1, \ \mathcal{G}_2, \ \mathcal{G}_3$ to belong to various Ma-Minda starlike classes of functions.
For $0\leq \alpha \leq 1$,  the class $\mathcal{S}^*(\alpha)=\mathcal{S}^*[1-2\alpha, -1]=\{f\in \mathcal{A}: \operatorname{Re} zf'(z)/f(z)>\alpha\}$ is the class of starlike functions of order $\alpha$. These classes were studied extensively in \cite{MR708494, MR704184, MR267103, MR328059}.

\begin{theorem}\label{starlike}
	The following sharp results hold for the class $\mathcal{S}^{*}(\alpha)$:
	\begin{enumerate}[label=(\roman*)]
		\item
		$R_{\mathcal{S}^{*}(\alpha)}(\mathcal{G}_1)= (1-\alpha)/ \left(3+\sqrt{10-2\alpha+\alpha^2}\right)  $.
		
	\item
$R_{\mathcal{S}^{*}(\alpha)}(\mathcal{G}_2)= 2(1-\alpha)/ \left(5+\sqrt{25-4\alpha+\alpha^2}\right)  $.

\item
$R_{\mathcal{S}^{*}(\alpha)}(\mathcal{G}_3)= (1-\alpha)/ \left(2+\sqrt{5-2\alpha+\alpha^2}\right) $.
		
	\end{enumerate}
\end{theorem}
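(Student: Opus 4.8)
The plan is to reduce each part to a one-variable comparison: locate the image of $\{|z|<r\}$ under $\zeta\mapsto \zeta f'(\zeta)/f(\zeta)$, decide when it lies in the half-plane $\{\operatorname{Re}w>\alpha\}$, and then match the resulting radius against the obvious extremal functions. Two remarks organize everything. First, the open disc $\{|w-1|<\delta\}$ is contained in $\{\operatorname{Re}w>\alpha\}$ exactly when $\delta\le 1-\alpha$. Second, $r^{-1}f(rz)\in\mathcal S^{*}(\alpha)$ for all $r<R$ is the same as $\operatorname{Re}(\zeta f'(\zeta)/f(\zeta))>\alpha$ on $\{|\zeta|<R\}$, so $R_{\mathcal S^{*}(\alpha)}(\mathcal F)$ is precisely the largest radius on which this real-part inequality is forced for every member of $\mathcal F$.

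For parts (i) and (iii) the argument is short. By \eqref{disc1} (resp.\ \eqref{disc3}) the map $\zeta f'/f$ sends $\{|z|<r\}$ into $\{|w-1|<6r/(1-r^{2})\}$ (resp.\ $\{|w-1|<4r/(1-r^{2})\}$), so the asserted radius is at least the root in $(0,1)$ of $6r/(1-r^{2})=1-\alpha$ (resp.\ $4r/(1-r^{2})=1-\alpha$); solving the quadratic $(1-\alpha)r^{2}+6r-(1-\alpha)=0$ (resp.\ $(1-\alpha)r^{2}+4r-(1-\alpha)=0$) and rationalizing gives exactly the expression in (i) (resp.\ (iii)). For the reverse inequality I would use the extremal functions of \eqref{f1}: a direct logarithmic-derivative computation yields $zf_1'(z)/f_1(z)=1+6z/(1-z^{2})$ and $zg_1'(z)/g_1(z)=1+4z/(1-z^{2})$, whose real parts at $z=-R$ (with $R$ the claimed radius) equal $\alpha$, so neither class is starlike of order $\alpha$ on any larger disc. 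This gives equality, and in passing — starlikeness forcing univalence, while $f_1'$ and $g_1'$ vanish at $-(\sqrt{10}-3)$ and $-(\sqrt5-2)$ — the stated radii of univalence at $\alpha=0$.

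Part (ii) is the delicate one, because \eqref{disc2} controls $|zf'/f-1|$ and not $\operatorname{Re}(zf'/f)$, and for the class $\mathcal G_2$ the gap between the two genuinely affects the radius. Instead I would start from the exact identity \eqref{d2}, $zf'/f-1=zh_1'/h_1+zh_2'/h_2+zp'/p$ with $h_1,p\in\mathcal P$ and $h_2\in\mathcal P(1/2)$, take real parts, and estimate the three terms separately: the two $\mathcal P$-terms are $\ge-2r/(1-r^{2})$ by \eqref{shah}, while the $\mathcal P(1/2)$-term obeys the sharper inequality $\operatorname{Re}(zh_2'(z)/h_2(z))\ge-r/(1+r)$ for $|z|=r$ throughout the (small) range of radii that occurs here. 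Establishing this last estimate — via $h_2\prec 1/(1-z)$, the representation $h_2=1/(1-\omega)$ with $\omega$ a Schwarz function, and a careful lower bound for $\operatorname{Re}\bigl(z\omega'(z)/(1-\omega(z))\bigr)$ — is the main obstacle. Summing the three bounds gives $\operatorname{Re}(zf'/f)\ge(1-5r)/(1-r^{2})$, and requiring this to exceed $\alpha$ produces $R$ as the root in $(0,1)$ of $\alpha r^{2}-5r+(1-\alpha)=0$, which after rationalization is the expression displayed in (ii). Sharpness is then immediate from \eqref{f2}: $zf_2'(z)/f_2(z)=(1+5z)/(1-z^{2})$, and its real part on $|z|=r$ is minimized at $z=-r$, equal there to $(1-5r)/(1-r^{2})$, so $f_2$ leaves $\mathcal S^{*}(\alpha)$ as soon as $r$ exceeds $R$.
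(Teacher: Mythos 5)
Your proposal is correct and essentially reproduces the paper's own proof: parts (i) and (iii) follow from the disc bounds \eqref{disc1} and \eqref{disc3} with sharpness checked at $z=-\rho$ for $f_1$ and $g_1$, and part (ii) is handled, exactly as in the paper, by taking real parts in the identity \eqref{d2} and using $\operatorname{Re}(zh'(z)/h(z))\ge -2r/(1-r^2)$ for $h\in\mathcal{P}$ together with $\operatorname{Re}(zh_2'(z)/h_2(z))\ge -r/(1+r)$ for $h_2\in\mathcal{P}(1/2)$, with sharpness for $f_2$ at $z=-\rho$. The $\mathcal{P}(1/2)$ estimate you single out as the main obstacle is precisely Lemma 2.4 of Causey and Merkes \cite{MR264047}, which the paper simply cites (its Lemma 2.3 covers the $\mathcal{P}$-terms), so no new proof is needed; note only that the common quadratic $\alpha r^2-5r+(1-\alpha)=0$ rationalizes to $2(1-\alpha)/\left(5+\sqrt{25-4\alpha+4\alpha^2}\right)$, so the $\alpha^2$ appearing in the displayed formula of (ii) is a typo in the statement rather than a defect of your derivation.
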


\begin{proof}
	\begin{enumerate}[label=(\roman*)]
		\item The function defined by $m(r)=(1-6r-r^2)(1-r^2)^{-1}, \ 0\leq r<1$ is a decreasing function. Let 	$\rho=R_{\mathcal{S}^{*}(\alpha)}(\mathcal{G}_1)$ is the root of the equation $m(r)=\alpha$. From \eqref{disc1},  it follows that \[\operatorname{Re}\frac{zf'(z)}{f(z)}\geq \frac{1-6r-r^2}{1-r^2}=m(r)\geq m(\rho)=\alpha. \] This shows that $R_{\mathcal{S}^{*}(\alpha)}(\mathcal{G}_1)$ is at least $\rho$. At $z=-R_{\mathcal{S}^{*}(\alpha)}(\mathcal{G}_1)=-\rho$,  the function $f_1$ defined in \eqref{f1} satisfies \[\operatorname{Re}\frac{zf_1'(z)}{f_1(z)}=\frac{1-6\rho-\rho^2}{1-\rho^2}=\alpha.\] Thus the radius is sharp.
		
		\item  The function $n(r):[0,1)\longrightarrow \mathbb{R}$ defined by $n(r)=(1-5r)(1-r^2)^{-1}$ is a decreasing function. Let 	$\rho=R_{\mathcal{S}^{*}(\alpha)}(\mathcal{G}_2)$ is the root of the equation $n(r)=\alpha$.
Let $f\in \mathcal{G}_2$ and let $h_1,  p \in \mathcal{P}$ and $h_2 \in \mathcal{P}(1/2)$ be the functions defined in Section~\ref{sec2.2}. For $h\in \mathcal{P}$,  by \cite[Lemma 2.3]{MR264047}, we have \begin{equation}\label{q1}
		\operatorname{Re}\frac{zh'(z)}{h(z)}\geq - \frac{2r}{1-r^2},\quad |z|\leq r.
	\end{equation}
	For   $k\in\mathcal{P}(1/2)$, by \cite[Lemma 2.4]{MR264047},  we have \begin{equation}\label{q2}
	\operatorname{Re}\frac{zk'(z)}{k(z)}\geq- \frac{r}{1+r},\quad |z|\leq r.
	\end{equation}
Using  \eqref{q1} and \eqref{q2} in \eqref{d2}, it follows that
\[\operatorname{Re}\frac{zf'(z)}{f(z)}\geq 1-\frac{4r}{1-r^2}-\frac{r}{1+r}=\frac{1-5r}{1-r^2}=n(r)\geq n(\rho)=\alpha.\] This shows that $R_{\mathcal{S}^{*}(\alpha)}(\mathcal{G}_2)$ is at least $\rho$. At $z=-R_{\mathcal{S}^{*}(\alpha)}(\mathcal{G}_1)=-\rho$, the function $f_2$ defined in \eqref{f2} satisfies \[\operatorname{Re}\frac{zf_2'(z)}{f_2(z)}=\frac{1-5\rho}{1-\rho^2}=\alpha.\] Thus the radius is sharp.
		
	\item  The function defined by $s(r)=(1-4r-r^2)(1-r^2)^{-1}, \ 0\leq r<1$ is a decreasing function. Let $\rho= R_{\mathcal{S}^{*}(\alpha)}(\mathcal{G}_3)$ is the root of the equation $s(r)=\alpha$. From \eqref{disc3},  it follows that \[\operatorname{Re}\frac{zf'(z)}{f(z)}\geq \frac{1-4r-r^2}{1-r^2}=s(r)\geq s(\rho)=\alpha. \]This shows that $R_{\mathcal{S}^{*}(\alpha)}(\mathcal{G}_3)$ is at least $\rho$. For the function $g_1$ defined in \eqref{f1},  at $z=-R_{\mathcal{S}^{*}(\alpha)}(\mathcal{G}_2)=-\rho$,
	\[\operatorname{Re}\frac{zg_1'(z)}{g_1(z)}=\frac{1-4\rho-\rho^2}{1-\rho^2}=\alpha.\] Thus the radius is sharp.\qedhere
	\end{enumerate}
\end{proof}
The class $\mathcal{S}_{L}^{*}=\mathcal{S}^{*}(\sqrt{1+z})$ and it represents the collection of functions in the class $\mathcal{A}$ whose $zf'(z)/f(z)$ lies in the region bounded by the lemniscate of Bernoulli $|w^2-1|=1$. Various studies on $\mathcal{S}_{L}^{*}$ can be seen in  \cite{MR1473947, MR2727984, MR1473960}.
Ali \emph{et al.}\  \cite[Lemma 2.2]{MR2879136} had proved that for $2\sqrt{2}/3<a<\sqrt{2}$, \begin{equation}\label{lemniscate}
	\{w:|w-a|<\sqrt{2}-a\} \subset \{w:|w^2-1|<1\}.\end{equation}  Using this lemma,   we obtain radii results for the classes $\mathcal{G}_1, \ \mathcal{G}_2, \ \mathcal{G}_3$ to be in the class $\mathcal{S}_{L}^{*}$ in the following theorem.

\begin{theorem}
The following results for the class $\mathcal{S}_{L}^{*}$ are sharp.
\begin{enumerate}[label=(\roman*)]
	\item
	$R_{\mathcal{S}_{L}^{*}}(\mathcal{G}_1)= (3-2\sqrt{2})/ (\sqrt{2}-1)\left(3+\sqrt{12-2\sqrt{2}}\right)  \approx 0.0687$.
	
	\item
	$R_{\mathcal{S}_{L}^{*}}(\mathcal{G}_2)= 2(\sqrt{2}-1)/ \left(5+\sqrt{33-4\sqrt{2}}\right)  \approx 0.0809$.
	
	\item
	$R_{\mathcal{S}_{L}^{*}}(\mathcal{G}_3)= (3-2\sqrt{2})/ (\sqrt{2}-1)\left(2+\sqrt{7-2\sqrt{2}}\right)  \approx 0.1025$.
	
\end{enumerate}
\end{theorem}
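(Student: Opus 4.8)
The plan is to combine the disc estimates \eqref{disc1}, \eqref{disc2} and \eqref{disc3} of Section~2 with the inclusion \eqref{lemniscate}. First I would record the geometric fact that pins down the constant $\sqrt2-1$: taking $a=1$ in \eqref{lemniscate} (legitimate since $2\sqrt2/3<1<\sqrt2$) gives $\{w:|w-1|<\sqrt2-1\}\subset\{w:|w^2-1|<1\}$, and in fact, writing $w=1+se^{i\theta}$ one has $|w^2-1|=s\,|2+se^{i\theta}|\le s(2+s)$, so the closed disc $\{|w-1|\le\rho\}$ lies in the lemniscate region exactly when $\rho(\rho+2)\le1$, i.e. when $\rho\le\sqrt2-1$, with equality attained only at the point $w=\sqrt2$ of $|w^2-1|=1$. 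This is the sole geometric input; the rest is elementary.

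For part (i), put $d_1(r)=6r/(1-r^2)$; it is strictly increasing on $[0,1)$ from $0$ to $\infty$, so there is a unique $R\in(0,1)$ with $d_1(R)=\sqrt2-1$, that is, with $(\sqrt2-1)R^2+6R-(\sqrt2-1)=0$. Solving this quadratic and rationalizing --- using $(\sqrt2-1)^2=3-2\sqrt2$, so the radicand is $9+(\sqrt2-1)^2=12-2\sqrt2$ --- yields $R=(3-2\sqrt2)/\bigl((\sqrt2-1)(3+\sqrt{12-2\sqrt2})\bigr)$, the claimed value. If $f\in\mathcal{G}_1$ and $0<r<R$, then \eqref{disc1} gives $|zf'(z)/f(z)-1|\le d_1(r)<d_1(R)=\sqrt2-1$ for $|z|\le r$, hence $zf'(z)/f(z)\prec\sqrt{1+z}$ and $r^{-1}f(rz)\in\mathcal{S}_{L}^{*}$; thus $R_{\mathcal{S}_{L}^{*}}(\mathcal{G}_1)\ge R$. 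For sharpness, logarithmic differentiation of the extremal $f_1$ in \eqref{f1} gives $zf_1'(z)/f_1(z)=1+6z/(1-z^2)$, which on $|z|\le r$ lands in $\{|w-1|\le d_1(r)\}$ and attains the rightmost point $1+d_1(r)$ at $z=r$; at $z=R$ this value equals $\sqrt2$, a boundary point of the lemniscate region, so the radius cannot be enlarged and $R_{\mathcal{S}_{L}^{*}}(\mathcal{G}_1)=R$.

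Parts (ii) and (iii) follow the same script. For (ii): use \eqref{disc2} with $d_2(r)=r(r+5)/(1-r^2)$, solve $d_2(R)=\sqrt2-1$, i.e. $\sqrt2\,R^2+5R-(\sqrt2-1)=0$ (radicand $25+4\sqrt2(\sqrt2-1)=33-4\sqrt2$), and rationalize to get $2(\sqrt2-1)/(5+\sqrt{33-4\sqrt2})$; the extremal $f_2$ of \eqref{f2} has $zf_2'(z)/f_2(z)=1+(5z+z^2)/(1-z^2)$, which equals $\sqrt2$ at $z=R$. For (iii): use \eqref{disc3} with $d_3(r)=4r/(1-r^2)$, solve $(\sqrt2-1)R^2+4R-(\sqrt2-1)=0$ (radicand $16+4(\sqrt2-1)^2=4(7-2\sqrt2)$), obtaining $(3-2\sqrt2)/\bigl((\sqrt2-1)(2+\sqrt{7-2\sqrt2})\bigr)$; the extremal $g_1$ of \eqref{f1} has $zg_1'(z)/g_1(z)=1+4z/(1-z^2)$, which equals $\sqrt2$ at $z=R$. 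The only step calling for genuine care, rather than bookkeeping, is the sharpness half: one must verify that in each case $zf'(z)/f(z)=1+N_i(z)/(1-z^2)$ with $|N_i(z)|\le N_i(r)$ for $|z|\le r$ while $|1-z^2|\ge1-r^2$ (with equality only at $z=\pm r$), so that the image of $\{|z|\le r\}$ lies in $\{|w-1|\le d_i(r)\}$ and hits its rightmost point $1+d_i(r)$ at the real point $z=r$; pushing $r$ past $R$ then drives $zf'(z)/f(z)$ off the lemniscate. No deeper obstacle appears.
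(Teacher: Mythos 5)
Your proposal is correct and follows essentially the same route as the paper: it combines the disc bounds \eqref{disc1}--\eqref{disc3} with the lemniscate inclusion \eqref{lemniscate} at centre $a=1$, solves the resulting quadratics $d_i(R)=\sqrt{2}-1$ to get the stated radii, and establishes sharpness by evaluating $zf'(z)/f(z)$ for the extremal functions $f_1$, $f_2$, $g_1$ at the real boundary point, where the value $\sqrt{2}$ lands on $|w^{2}-1|=1$. The only difference is cosmetic: you verify the $a=1$ case of \eqref{lemniscate} directly via $|w^{2}-1|=|w-1|\,|w+1|\le s(s+2)$ instead of merely citing the lemma, which adds self-containment but does not change the argument.
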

\begin{proof}
	\begin{enumerate}[label=(\roman*)]
		\item The function defined by $m(r)=6r(1-r^2)^{-1}+1, \ 0\leq r<1$ is an increasing function. Let $\rho=R_{\mathcal{S}_{L}^{*}}(\mathcal{G}_1)$ is the root of the equation $m(r)=\sqrt{2}$. For $0<r\leq R_{\mathcal{S}_{L}^{*}}(\mathcal{G}_1)$,  we have $m(r)\leq \sqrt{2}$. That is,  \[\frac{6r}{1-r^2}+1\leq \sqrt{2}=m(\rho).\] For the class $\mathcal{G}_1$,  the centre of the disc is $1$,  therefore the disc obtained in \eqref{disc1} is contained in the region bounded by lemniscate,  by  Lemma~\ref{lemniscate}. For the function $f_1$ defined in \eqref{f1},  at $z=R_{\mathcal{S}_{L}^{*}}(\mathcal{G}_1)=\rho$, \[\left|\left(\frac{zf'(z)}{f(z)}\right)^2-1\right|=\left|\left(\frac{1+6\rho-\rho^2}{1-\rho^2}\right)^2-1\right|=1.\]
		
			\item The function $n(r):[0, 1)\longrightarrow \mathbb{R}$ defined by $n(r)=(5r+r^2)(1-r^2)^{-1}+1$ is an increasing function. Let $\rho=R_{\mathcal{S}_{L}^{*}}(\mathcal{G}_2)$ is the root of the equation $n(r)=\sqrt{2}$. For $0<r\leq R_{\mathcal{S}_{L}^{*}}(\mathcal{G}_2)$,  we have $n(r)\leq \sqrt{2}$. That is,  \[\frac{5r+r^2}{1-r^2}+1\leq \sqrt{2}=n(\rho).\] For the class $\mathcal{G}_2$,  the centre of the disc is $1$,  therefore the disc obtained in \eqref{disc2} is contained in the region bounded by lemniscate,  by  Lemma~\ref{lemniscate}. For the function $f_2$ defined in \eqref{f2},  at $z=R_{\mathcal{S}_{L}^{*}}(\mathcal{G}_2)=\rho$, \[\left|\left(\frac{zf'(z)}{f(z)}\right)^2-1\right|=\left|\left(\frac{1+5\rho}{1-\rho^2}\right)^2-1\right|=1.\]
		
		\item The function defined by $s(r)=4r(1-r^2)^{-1}+1, \ 0\leq r<1$ is an increasing function. Let $\rho= R_{\mathcal{S}_{L}^{*}}(\mathcal{G}_3)$ is the root of the equation $s(r)=\sqrt{2}$ For $0<r\leq R_{\mathcal{S}_{L}^{*}}(\mathcal{G}_3)$,  we have $s(r)\leq \sqrt{2}$. That is,  \[\frac{4r}{1-r^2}+1\leq \sqrt{2}=s(\rho).\] For the class $\mathcal{G}_3$,  the centre of the disc is $1$,  therefore the disc obtained in \eqref{disc3} is contained in the region bounded by lemniscate,  by  Lemma~\ref{lemniscate}.  For the function $g_1$ defined in \eqref{f1},  at $z=R_{\mathcal{S}_{L}^{*}}(\mathcal{G}_3)=\rho$, \[\left|\left(\frac{zf'(z)}{f(z)}\right)^2-1\right|
=\left|\left(\frac{1+4\rho-\rho^2}{1-\rho^2}\right)^2-1\right|=\left|(\sqrt{2})^2-1\right|=1.\qedhere\]
	\end{enumerate}
\end{proof}

Let $\varphi_{PAR}(z):=1+\left(2/\pi^2\left(\log (1+\sqrt{z})/(1-\sqrt{z}) \right)^2 \right)$.  Since
$\varphi_{PAR}(\mathbb{D})=\{w:\operatorname{Re}w>|w-1|\}$ is a parabolic region,  the functions in the class $\mathcal{S}_{p}:=\mathcal{S}^{*}(\varphi_{PAR})$ are known as parabolic starlike functions. These functions are studied by authors in \cite{MR1460173, MR1182182, MR1624955}. Shanmugam and Ravichandran  \cite[pp.321]{MR1415180} had proved that for $1/2 < a<3/2$,  then
\begin{equation}\label{parabola}
\{w:|w-a|<a-1/2\}\subset \{w:\operatorname{Re}w>|w-1|\}.
\end{equation}The following theorem gives   the radius of  parabolic starlikeness of the three classes $\mathcal{G}_1$,  $\mathcal{G}_2$ and $\mathcal{G}_3$.

\begin{theorem}
	The following results hold for the class $\mathcal{S}_{p}$:
	\begin{enumerate}[label=(\roman*)]
		\item
		$R_{\mathcal{S}_{p}}(\mathcal{G}_1)= \sqrt{37}-6  \approx 0.0827$.
		\item
		$R_{\mathcal{S}_{p}}(\mathcal{G}_2)\geq  (2\sqrt{7}-5)/ 3 \approx 0.0972$.
		\item
		$R_{\mathcal{S}_{p}}(\mathcal{G}_3)= \sqrt{17}-4 \approx 0.1231$.
		
	\end{enumerate}
\end{theorem}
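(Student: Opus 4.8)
The plan is to run the disc‑containment scheme of the two preceding theorems, with the parabolic inclusion \eqref{parabola} replacing the lemniscate inclusion \eqref{lemniscate}. By \eqref{disc1}, \eqref{disc2} and \eqref{disc3}, for $f\in\mathcal{G}_j$ the image of $\{|z|\le r\}$ under $zf'(z)/f(z)$ lies in the disc centred at $1$ of radius $\delta_1(r)=6r/(1-r^2)$, $\delta_2(r)=r(r+5)/(1-r^2)$ and $\delta_3(r)=4r/(1-r^2)$ respectively, and a short computation shows that each $\delta_j$ is increasing on $[0,1)$. Taking $a=1$ in \eqref{parabola}, the disc $\{w:|w-1|<1/2\}$ is contained in the parabolic region $\varphi_{PAR}(\mathbb{D})=\{w:\operatorname{Re}w>|w-1|\}$, so $r^{-1}f(rz)\in\mathcal{S}_p$ whenever $\delta_j(r)<1/2$. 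Letting $\rho_j$ be the root in $(0,1)$ of $\delta_j(r)=1/2$, i.e.\ of $r^2+12r-1=0$, $3r^2+10r-1=0$ and $r^2+8r-1=0$, gives $\rho_1=\sqrt{37}-6$, $\rho_2=(2\sqrt{7}-5)/3$ and $\rho_3=\sqrt{17}-4$; this establishes all of $(ii)$ and the lower bounds in $(i)$ and $(iii)$.

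For the sharpness in $(i)$ and $(iii)$ I would turn to the extremal functions. A logarithmic‑derivative computation gives $zf_1'(z)/f_1(z)=1+6z/(1-z^2)$ and $zg_1'(z)/g_1(z)=1+4z/(1-z^2)$. Using $\rho_1^2+12\rho_1-1=0$ and $\rho_3^2+8\rho_3-1=0$, one finds $zf_1'(z)/f_1(z)=1/2$ at $z=-\rho_1$ and $zg_1'(z)/g_1(z)=1/2$ at $z=-\rho_3$, and $1/2$ is the vertex of the boundary parabola, lying on the line $\operatorname{Re}w=|w-1|$. For any $r\in(\rho_j,1)$, pick a negative real $z_0$ with $\rho_j<|z_0|<r$; since $t\mapsto 6t/(1-t^2)$ (resp.\ $t\mapsto 4t/(1-t^2)$) is increasing on $[0,1)$, the value $1-6|z_0|/(1-z_0^2)$ (resp.\ $1-4|z_0|/(1-z_0^2)$) is real and strictly less than $1/2$, hence has $\operatorname{Re}w<|w-1|$ and lies outside $\varphi_{PAR}(\mathbb{D})$; so $r^{-1}f_1(rz)$ (resp.\ $r^{-1}g_1(rz)$) is not parabolic starlike. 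This forces $R_{\mathcal{S}_p}(\mathcal{G}_1)=\rho_1$ and $R_{\mathcal{S}_p}(\mathcal{G}_3)=\rho_3$; a maximum‑modulus argument for $z/(1-z^2)$ further shows the rescaled extremal function still lies in $\mathcal{S}_p$ at $r=\rho_j$ itself.

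The reason $(ii)$ is stated only as an inequality, and the main difficulty, is that \eqref{disc2} is not attained at the point of $\{zf_2'(z)/f_2(z):|z|=r\}$ nearest the vertex of the parabola. Indeed $zf_2'(z)/f_2(z)=1+(5z+z^2)/(1-z^2)$: at $z=r$ its distance from $1$ is exactly $r(r+5)/(1-r^2)$, the bound in \eqref{disc2}, but there $\operatorname{Re}w=(1+5r)/(1-r^2)>1$ lies strictly inside $\varphi_{PAR}(\mathbb{D})$, whereas at $z=-r$ one has only $|w-1|=r(5-r)/(1-r^2)<r(5+r)/(1-r^2)$, a strict overestimate. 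Hence $f_2$ does not leave $\varphi_{PAR}(\mathbb{D})$ at $r=\rho_2$, and pinning down the exact radius would require the true boundary of $\cup_{f\in\mathcal{G}_2}\{zf'(z)/f(z):|z|\le r\}$ (cf.\ the remark at the end of Section~\ref{sec2.2}); I would therefore leave $(ii)$ as the stated lower bound.
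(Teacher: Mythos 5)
Your proposal is correct and follows essentially the same route as the paper: the disc bounds \eqref{disc1}--\eqref{disc3} combined with the inclusion \eqref{parabola} at $a=1$ give the lower bounds $\rho_1=\sqrt{37}-6$, $\rho_2=(2\sqrt{7}-5)/3$, $\rho_3=\sqrt{17}-4$, and sharpness for (i) and (iii) comes from $zf_1'(z)/f_1(z)=1+6z/(1-z^2)$ and $zg_1'(z)/g_1(z)=1+4z/(1-z^2)$ hitting the vertex $w=1/2$ of the parabola at $z=-\rho$ (you correctly place this at $z=-\rho$, whereas the paper's part (i) has a sign slip writing $z=\rho$ and $(1+6\rho-\rho^2)/(1-\rho^2)=1/2$). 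Your extra explanation of why (ii) is only a lower bound, via $zf_2'(z)/f_2(z)=1+(5z+z^2)/(1-z^2)$ staying inside the parabola at $z=-\rho_2$, is consistent with the paper's closing remark conjecturing the sharp value $5-2\sqrt{6}$.
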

\begin{proof}
	\begin{enumerate}[label=(\roman*)]
		\item The function defined by $m(r)=(1-6r-r^2)(1-r^2)^{-1}, \ 0\leq r<1$ is a decreasing function. Let $\rho=R_{\mathcal{S}_{p}}(\mathcal{G}_1)$ is the root of the equation $m(r)=1/2$. For $0<r\leq R_{\mathcal{S}_{p}}(\mathcal{G}_1)$,  we have $m(r)\geq 1/2$. That is,  \[\frac{6r}{1-r^2}\leq\frac{1}{2}=m(\rho).\] For the class $\mathcal{G}_1$,  the centre of the disc is $1$,  therefore the disc obtained in \eqref{disc1} is contained in the region bounded by parabola,  by  Lemma~\ref{parabola}. For the function $f_1$ defined in \eqref{f1},  at $z=R_{\mathcal{S}_{p}}(\mathcal{G}_1)=\rho$,
		\[\operatorname{Re}\frac{zf_1'(z)}{f_1(z)}=\frac{1+6\rho-\rho^2}{1-\rho^2}=\frac{1}{2}=\left|\frac{zf_1'(z)}{f_1(z)}-1\right|.\]
		
	\item The function $n(r):[0, 1)\longrightarrow \mathbb{R}$ defined by $n(r)=(1-5r-2r^2)(1-r^2)^{-1}+1$ is a decreasing function. Let $\rho=R_{\mathcal{S}_{p}}(\mathcal{G}_2)$ is the root of the equation $n(r)=1/2$.  For $0<r\leq R_{\mathcal{S}_{p}}(\mathcal{G}_2)$,  we have $n(r)\geq 1/2$. That is,  \[\frac{r(r+5)}{1-r^2}\leq\frac{1}{2}=n(\rho).\] For the class $\mathcal{G}_2$,  the centre of the disc is $1$,  therefore the disc obtained in \eqref{disc2} is contained in the region bounded by parabola,  by  Lemma~\ref{parabola}. This shows that $R_{\mathcal{S}_{p}}(\mathcal{G}_2)$ is at least $\rho$.
		
		\item The function defined by $s(r)=1-(4r(1-r^2)^{-1}), \ 0\leq r<1$ is a decreasing function. Let $\rho= R_{\mathcal{S}_{p}}(\mathcal{G}_3)$ is the root of the equation $s(r)=1/2$. For $0<r\leq R_{\mathcal{S}_{p}}(\mathcal{G}_3)$,  we have $s(r)\geq 1/2$. That is,  \[\frac{4r}{1-r^2}\leq \frac{1}{2}=s(\rho).\] For the class $\mathcal{G}_3$,  the centre of the disc is $1$,  therefore the disc obtained in \eqref{disc3} is contained in the region bounded by parabola,  by  Lemma~\ref{parabola}.  For the function $g_1$ defined in \eqref{f1},  at $z=-R_{\mathcal{S}_{p}}(\mathcal{G}_3)=-\rho$,
	\[\operatorname{Re}\frac{zg_1'(z)}{g_1(z)}=\frac{1-4\rho-\rho^2}{1-\rho^2}=\frac{1}{2}=\left|\frac{zg_1'(z)}{g_1(z)}-1\right|.
\qedhere\]
	\end{enumerate}
\end{proof}

In 2015,  Mendiratta \emph{et al.}\  \cite{MR3394060} introduced the class of starlike functions associated with the exponential function as $\mathcal{S}_{e}^{*}=\mathcal{S}^{*}(e^z)$ and it satisfies the condition $\left|\log zf'(z)/f(z)\right|<1$. They had also proved that,  for $e^{-1}\leq a\leq (e+e^{-1})/2$,  \begin{equation}\label{exponential}
	\{w\in \mathbb{C}:|w-a|<a-e^{-1}\}\subseteq \{w\in \mathbb{C}:|\log w|<1 \}.
\end{equation}
\begin{theorem}
	The following results hold for the class $\mathcal{S}_{e}^{*}$:
	\begin{enumerate}[label=(\roman*)]
		\item
		$R_{\mathcal{S}_{e}^{*}}(\mathcal{G}_1)= (e-1)/\left(3e+\sqrt{10e^{2}-2e+1 }\right)\approx 0.1042$.
		\item
		$R_{\mathcal{S}_{e}^{*}}(\mathcal{G}_2)\geq 2(e^2+e-2)/(2+e)\left(5e+\sqrt{8+4e+29e^{2} }\right)\approx 0.1213$.
		\item
		$R_{\mathcal{S}_{e}^{*}}(\mathcal{G}_3)= 2(e-1)^2/(e+1)\left(4e+\sqrt{4-8e+20e^{2}}\right)\approx 0.1543$.
	\end{enumerate}
\end{theorem}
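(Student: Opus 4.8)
The plan is to repeat, in structure, the argument used above for the lemniscate and parabolic classes, now anchoring the inclusion \eqref{exponential} at the centre $a=1$. Since $e^{-1}\le 1\le(e+e^{-1})/2$, \eqref{exponential} gives $\{w:|w-1|<1-e^{-1}\}\subseteq\{w:|\log w|<1\}=e^z(\mathbb{D})$, and as $e^z$ is univalent on $\mathbb{D}$ the condition $zf'(z)/f(z)\in e^z(\mathbb{D})$ on $\mathbb{D}$ is equivalent to $zf'(z)/f(z)\prec e^z$, i.e.\ $f\in\mathcal{S}_e^{*}$; note also that $1-e^{-1}$ is the exact distance from $1$ to the nearest boundary point $e^{-1}$ of $e^z(\mathbb{D})$, so this disc about $1$ is the largest admissible one. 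For $f\in\mathcal{G}_1$ (resp.\ $\mathcal{G}_2$, $\mathcal{G}_3$), \eqref{disc1} (resp.\ \eqref{disc2}, \eqref{disc3}) places $zf'(z)/f(z)$ in the disc $|w-1|\le\rho_i(r)$ with $\rho_1(r)=6r/(1-r^2)$, $\rho_2(r)=r(r+5)/(1-r^2)$, $\rho_3(r)=4r/(1-r^2)$ for $|z|\le r$. Each $\rho_i$ is strictly increasing on $[0,1)$ from $0$ to $+\infty$, so $\rho_i(r)=1-e^{-1}$ has a unique root $\rho^{(i)}\in(0,1)$, and for $0<r\le\rho^{(i)}$ the disc lies inside $e^z(\mathbb{D})$; hence $R_{\mathcal{S}_e^{*}}(\mathcal{G}_i)\ge\rho^{(i)}$.

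It remains to solve the three equations and prove sharpness. Clearing denominators with $1-e^{-1}=(e-1)/e$, the equations $\rho_1(r)=1-e^{-1}$, $\rho_3(r)=1-e^{-1}$, $\rho_2(r)=1-e^{-1}$ become the quadratics $(e-1)r^2+6er-(e-1)=0$, $(e-1)r^2+4er-(e-1)=0$, $(2e-1)r^2+5er-(e-1)=0$; their positive roots, after rationalising, are the values in (i)--(iii). For sharpness of (i) and (iii) I would use the extremal functions $f_1,g_1$ of \eqref{f1}: a direct computation gives $zf_1'(z)/f_1(z)=(1+6z-z^2)/(1-z^2)$ and $zg_1'(z)/g_1(z)=(1+4z-z^2)/(1-z^2)$, so at $z=-\rho^{(1)}$ one has $zf_1'(z)/f_1(z)=1-6\rho^{(1)}/(1-(\rho^{(1)})^2)=e^{-1}$ and at $z=-\rho^{(3)}$ likewise $zg_1'(z)/g_1(z)=e^{-1}$. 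Since $|\log e^{-1}|=1$, the point $e^{-1}$ lies on $\partial\,e^z(\mathbb{D})$, so for $R>\rho^{(i)}$ the rescaling $F(z)=R^{-1}f_1(Rz)$ (resp.\ $R^{-1}g_1(Rz)$) attains $zF'(z)/F(z)=e^{-1}$ at the interior point $z=-\rho^{(i)}/R$ and hence $F\notin\mathcal{S}_e^{*}$; combined with the lower bound this gives equality. For $\mathcal{G}_2$, testing $f_2$ of \eqref{f2} yields $|zf_2'(z)/f_2(z)-1|=\rho(5-\rho)/(1-\rho^2)<\rho_2(\rho)$ at $z=-\rho$, so the extremal does not reach $\partial\,e^z(\mathbb{D})$ and only the displayed lower bound is claimed.

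The method is a template instance of the two preceding theorems, so I anticipate no real obstacle; the only step requiring attention is the sharpness half, where one must confirm that the extremal function genuinely realises the boundary value $e^{-1}$ — the unique point of $\partial\,e^z(\mathbb{D})$ met by the inscribed disc about $1$ — on $|z|=\rho^{(i)}$, rather than merely touching the boundary circle of that disc at a point still lying inside $e^z(\mathbb{D})$, this latter failure being precisely what prevents sharpness for $\mathcal{G}_2$.
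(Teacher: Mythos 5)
Your proposal is correct and follows essentially the same route as the paper: the inclusion \eqref{exponential} applied at the centre $a=1$ (radius $1-e^{-1}$) combined with the disc bounds \eqref{disc1}, \eqref{disc2}, \eqref{disc3}, leading to the same equations for the critical radii, with sharpness tested on $f_1$ and $g_1$ and only a lower bound claimed for $\mathcal{G}_2$. Your sharpness verification at $z=-\rho$, where $zf_1'(z)/f_1(z)$ and $zg_1'(z)/g_1(z)$ equal the boundary value $e^{-1}$, is in fact the right endpoint (the paper's displayed check uses $z=+\rho$, where the value is $2-e^{-1}$), and your rescaling remark makes the passage from this boundary contact to equality of the radius explicit.
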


\begin{proof}
	\begin{enumerate}[label=(\roman*)]
		\item The function defined by $m(r)=(1-6r-r^2)(1-r^2)^{-1}, \ 0\leq r<1$ is a decreasing function. Let $\rho=R_{\mathcal{S}_{e}^{*}}(\mathcal{G}_1)$ is the root of the equation $m(r)=1/e$. For $0<r\leq R_{\mathcal{S}_{e}^{*}}(\mathcal{G}_1)$,  we have $m(r)\geq 1/e$. That is,  \[\frac{6r}{1-r^2}\leq1- \frac{1}{e}.\] For the class $\mathcal{G}_1$,  the centre of the disc is $1$,  therefore the disc obtained in \eqref{disc1} is contained in the region bounded by exponential function,  by  Lemma~\ref{exponential}.
		For the function $f_1$ defined in \eqref{f1},  at $z=R_{\mathcal{S}_{e}^{*}}(\mathcal{G}_1)=\rho$,
		\[\left|\log\frac{zf_1'(z)}{f_1(z)}\right|=\left|\log\frac{1+6\rho-\rho^2}{1-\rho^2}\right|=1.\]
		
		\item The function $n(r):[0, 1)\longrightarrow \mathbb{R}$ defined  by $n(r)=(1-5r-2r^2)(1-r^2)^{-1}+1$ is a decreasing function. Let $\rho=R_{\mathcal{S}_{e}^{*}}(\mathcal{G}_2)$ is the root of the equation $n(r)=1/e$.  For $0<r\leq R_{\mathcal{S}_{e}^{*}}(\mathcal{G}_2)$,  we have $n(r)\geq 1/e$. That is,  \[\frac{r(r+5)}{1-r^2}\leq1-\frac{1}{e}.\] For the class $\mathcal{G}_2$,  the centre of the disc is $1$,  therefore the disc obtained in \eqref{disc2} is contained in the region bounded by the exponential function,  by  Lemma~\ref{exponential}. This shows that $R_{\mathcal{S}_{e}^{*}}(\mathcal{G}_2)$ is at least $\rho$.
		
		\item The function defined by $s(r)=1-(4r(1-r^2)^{-1}), \ 0\leq r<1$ is a decreasing function. Let $\rho=R_{\mathcal{S}_{e}^{*}}(\mathcal{G}_3)$ is the root of the equation $s(r)=1/e$. For $0<r\leq R_{\mathcal{S}_{e}^{*}}(\mathcal{G}_3)$,  we have $s(r)\geq 1/e$. That is,  \[\frac{4r}{1-r^2}\leq \frac{e-1}{e}.\] For the class $\mathcal{G}_3$,  the centre of the disc is $1$,  therefore the disc obtained in \eqref{disc3} is contained in the region bounded by the exponential function,  by  Lemma~\ref{exponential}.  For the function $g_1$ defined in \eqref{f1},  at $z=R_{\mathcal{S}_{e}^{*}}(\mathcal{G}_3)=\rho$,
		\[\left|\log\frac{zg_1'(z)}{g_1(z)}\right|=\left|\log\frac{1+4\rho-\rho^2}{1-\rho^2}\right|=1.\qedhere\]
	\end{enumerate}
\end{proof}

Theorem \ref{thcardioid} provides radii results for starlike functions associated with a cardioid. Sharma \emph{et al.}\  \cite{MR3536076} studied various properties of the class $\mathcal{S}_{c}^{*}=\mathcal{S}^{*}(1+(4/3)z+(2/3)z^2)$. Geometrically,  if a function $f\in \mathcal{S}_{c}^{*}$ then $zf'(z)/f(z)$ lies in the region bounded by the cardioid  $\Omega_{c}=\{u+iv: (9u^{2}+9v^{2}-18u+5)^{2}-16(9u^{2}+9v^{2}-6u+1)=0\}$. They had also proved that,  for $1/3 < a \leq 5/3$,
\begin{equation}\label{cardiod}
\{w \in \mathbb{C}: \left | w-a \right |<(3a-1)/3 \}\subseteq \Omega_{c}.
\end{equation}

\begin{theorem}\label{thcardioid}
	The following results hold for the class $\mathcal{S}_{c}^{*}$:
	\begin{enumerate}[label=(\roman*)]
		\item
		$R_{\mathcal{S}_{c}^{*}}(\mathcal{G}_1)= (\sqrt{85}-9)/2 \approx 0.1097$.
		\item
		$R_{\mathcal{S}_{c}^{*}}(\mathcal{G}_2)\geq (\sqrt{265}-15)/10 \approx 0.1279$.
		\item
		$R_{\mathcal{S}_{c}^{*}}(\mathcal{G}_3)= \sqrt{10}-3 \approx 0.1623$.
		
	\end{enumerate}

\end{theorem}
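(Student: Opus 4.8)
The plan is to run exactly the same argument used for $\mathcal{S}_L^{*}$, $\mathcal{S}_p$ and $\mathcal{S}_e^{*}$, now with the cardioid $\Omega_c$ as the target region and Lemma~\eqref{cardiod} supplying the disc--inclusion. The discs produced in \eqref{disc1}, \eqref{disc2} and \eqref{disc3} are all centred at $1$, and since $1/3<1\le 5/3$, Lemma~\eqref{cardiod} applies with $a=1$: a disc about $1$ lies in $\Omega_c$ as soon as its radius is at most $(3\cdot1-1)/3=2/3$.

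For $\mathcal{G}_1$ I would put $m(r)=6r/(1-r^2)$, which is increasing on $[0,1)$, and let $\rho=R_{\mathcal{S}_c^{*}}(\mathcal{G}_1)$ be the root of $m(r)=2/3$; clearing denominators this is $r^2+9r-1=0$, so $\rho=(\sqrt{85}-9)/2$. For $0<r\le\rho$ the estimate \eqref{disc1} confines $zf'(z)/f(z)$ to $\{w:|w-1|\le 2/3\}\subset\Omega_c$, giving $f\in\mathcal{S}_c^{*}$ and hence $R_{\mathcal{S}_c^{*}}(\mathcal{G}_1)\ge\rho$. For sharpness take $f_1$ from \eqref{f1}: from $zf_1'(z)/f_1(z)=1+6z/(1-z^2)$ we get $zf_1'(-\rho)/f_1(-\rho)=1-2/3=1/3$, and $(u,v)=(1/3,0)$ makes both $9u^2+9v^2-18u+5$ and $9u^2+9v^2-6u+1$ vanish, so $1/3\in\partial\Omega_c$ (it is the cusp), which is precisely the point where the radius-$2/3$ disc about $1$ meets the cardioid; therefore the bound cannot be enlarged. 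The class $\mathcal{G}_3$ is treated identically using \eqref{disc3}: $4r/(1-r^2)=2/3$ becomes $r^2+6r-1=0$, so $\rho=\sqrt{10}-3$, and sharpness follows from $g_1$ of \eqref{f1}, for which $zg_1'(z)/g_1(z)=1+4z/(1-z^2)$ again gives the boundary value $1/3$ at $z=-\rho$.

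For $\mathcal{G}_2$ I would use \eqref{disc2}: the requirement $r(r+5)/(1-r^2)\le 2/3$ is $5r^2+15r-2\le 0$, whose positive root is $\rho=(\sqrt{265}-15)/10$, and the same inclusion argument yields $R_{\mathcal{S}_c^{*}}(\mathcal{G}_2)\ge\rho$. I expect only this lower bound, which is the one step that is not automatic: the candidate extremal $f_2$ of \eqref{f2} satisfies $zf_2'(z)/f_2(z)=1+2z/(1+z)+3z/(1-z)$, so at $z=-r$ it equals $1-(5r-r^2)/(1-r^2)$, which lies strictly to the right of the left endpoint $1-(5r+r^2)/(1-r^2)$ of the disc in \eqref{disc2}; since it is exactly that left endpoint that is binding against the cusp of $\Omega_c$, the disc in \eqref{disc2} is not realised on the relevant side and equality cannot be claimed. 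Apart from verifying that $1/3$ really is the contact point of the radius-$2/3$ disc with $\partial\Omega_c$ in parts (i) and (iii), everything else is the routine bookkeeping already carried out in the earlier theorems.
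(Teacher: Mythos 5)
Your proposal is correct and follows essentially the same route as the paper: the discs \eqref{disc1}--\eqref{disc3} centred at $1$ are placed inside $\Omega_c$ via Lemma \eqref{cardiod} with $a=1$ (radius $2/3$), the radii are the roots of $6r/(1-r^2)=2/3$, $r(r+5)/(1-r^2)=2/3$, $4r/(1-r^2)=2/3$, and sharpness for (i) and (iii) comes from $f_1$ and $g_1$ hitting the cusp value $1/3$ (your evaluation at $z=-\rho$ is in fact the correct reading of the paper's sharpness claim), with only a lower bound asserted for $\mathcal{G}_2$. Your extra remark on why $f_2$ cannot certify sharpness in (ii) matches the paper's stance, which likewise proves only the estimate there.
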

\begin{proof}
	\begin{enumerate}[label=(\roman*)]
		\item The function defined by $m(r)=(1-6r-r^2)(1-r^2)^{-1}, \ 0\leq r<1$ is a decreasing function. Let $\rho=R_{\mathcal{S}_{c}^{*}}(\mathcal{G}_1)$ is the root of the equation $m(r)=1/3$.  For $0<r\leq R_{\mathcal{S}_{c}^{*}}(\mathcal{G}_1)$,  we have $m(r)\geq 1/3$. That is,  \[\frac{6r}{1-r^2}\leq1- \frac{1}{3}.\] For the class $\mathcal{G}_1$,  the centre of the disc is $1$,  therefore the disc obtained in \eqref{disc1} is contained in the region bounded by the cardioid,  by  Lemma~\ref{cardiod}.
		For the function $f_1$ defined in \eqref{f1},  at $z=R_{\mathcal{S}_{c}^{*}}(\mathcal{G}_1)=\rho$,
		\[\left|\frac{zf_1'(z)}{f_1(z)}\right|=\left|\frac{1+6\rho-\rho^2}{1-\rho^2}\right|=\frac{1}{3}=\Omega_{c}(-1).\]
		
		\item The function $n(r):[0, 1)\longrightarrow \mathbb{R}$ defined  by $n(r)=(1-5r-2r^2)(1-r^2)^{-1}+1$ is a decreasing function. Let $\rho=R_{\mathcal{S}_{c}^{*}}(\mathcal{G}_2)$ is the root of the equation $n(r)=1/3$. For $0<r\leq R_{\mathcal{S}_{c}^{*}}(\mathcal{G}_2)$,  we have $n(r)\geq 1/3$. That is,  \[\frac{r(r+5)}{1-r^2}\leq1-\frac{1}{3}.\] For the class $\mathcal{G}_2$,  the centre of the disc is $1$,  therefore the disc obtained in \eqref{disc2} is contained in the region bounded by the cardioid,  by  Lemma~\ref{cardiod}. This shows that $R_{\mathcal{S}_{c}^{*}}(\mathcal{G}_2)$ is at least $\rho$.
		\item The function defined by $s(r)=1-(4r(1-r^2)^{-1}), \ 0\leq r<1$  is a decreasing function. Let $\rho=R_{\mathcal{S}_{c}^{*}}(\mathcal{G}_3)$ is the root of the equation $s(r)=1/3$.  For $0<r\leq R_{\mathcal{S}_{c}^{*}}(\mathcal{G}_3)$,  we have $s(r)\geq 1/3$. That is,  \[\frac{4r}{1-r^2}\leq \frac{2}{3}.\] For the class $\mathcal{G}_3$,  the centre of the disc is $1$,  therefore the disc obtained in \eqref{disc3} is contained in the region bounded by the cardioid,  by  Lemma~\ref{cardiod}.  For the function $g_1$ defined in \eqref{f1},  at $z=R_{\mathcal{S}_{c}^{*}}(\mathcal{G}_3)=\rho$,
		\[\left|\frac{zf_1'(z)}{f_1(z)}\right|=\left|\frac{1+4\rho-\rho^2}{1-\rho^2}\right|=\frac{1}{3}=\Omega_{c}(-1).\qedhere\]
	\end{enumerate}
\end{proof}

In 2019,  Cho \emph{et al.}\  \cite{MR3913990} considered the class of starlike functions associated with sine function where the class $\mathcal{S}_{sin}^{*}$ is defined as $\mathcal{S}_{sin}^{*}=\{f\in \mathcal{A}: zf'(z)/f(z)\prec 1+ \sin z:=q_0(z)\}$ for $z \in \mathbb{D}$. For $\left|a-1\right| \leq \sin 1$,  they had established the following inclusion:
\begin{equation}\label{sine}
\{w \in \mathbb{C}: \left | w-a \right |< \sin 1-\left|a-1\right|\}\subseteq \Omega_{s}.
\end{equation}
Here $\Omega_{s}:=q_{0}(\mathbb{D})$ is the image of the unit disk $\mathbb{D}$ under the mappings $q_{0}(z)=1+\sin z$.
\begin{theorem}
	The following results are sharp  for the class $\mathcal{S}_{sin}^{*}$.
	\begin{enumerate}[label=(\roman*)]
		\item
		$R_{\mathcal{S}_{sin}^{*}}(\mathcal{G}_1)= \sin 1 /\left(3+\sqrt{9+ \sin^2 1 }\right)\approx 0.1375$.
		\item
		$R_{\mathcal{S}_{sin}^{*}}(\mathcal{G}_2)= 2\sin 1 /\left(5+\sqrt{25+ 4\sin 1+4\sin^2 1 }\right)\approx 0.1589$.
		\item
		$R_{\mathcal{S}_{sin}^{*}}(\mathcal{G}_3)= \sin 1 /\left(2+\sqrt{4+ \sin^2 1 }\right)\approx 0.2018$.
		
	\end{enumerate}
\end{theorem}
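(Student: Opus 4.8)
The plan is to imitate the template of the three preceding theorems, now using the disc inclusion \eqref{sine} in place of \eqref{lemniscate}, \eqref{parabola}, \eqref{cardiod} and \eqref{exponential}. Each of the discs in \eqref{disc1}, \eqref{disc2}, \eqref{disc3} containing the image $\{zf'(z)/f(z):|z|\le r\}$ is centred at $a=1$, so the hypothesis $|a-1|\le\sin 1$ of \eqref{sine} is automatic and the inclusion reduces to $\{w:|w-1|<\sin 1\}\subseteq\Omega_s$. Consequently $r^{-1}f(rz)\in\mathcal{S}_{sin}^{*}$ as soon as the radius of the relevant disc is at most $\sin 1$; that is, $6r/(1-r^2)\le\sin 1$ for $\mathcal{G}_1$, $r(r+5)/(1-r^2)\le\sin 1$ for $\mathcal{G}_2$, and $4r/(1-r^2)\le\sin 1$ for $\mathcal{G}_3$. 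In each case the left-hand side is an increasing function of $r$ on $[0,1)$ that vanishes at $r=0$, so the corresponding equation has a unique root $\rho\in(0,1)$ and the inequality holds on $(0,\rho]$.

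Next I would solve the three quadratics. Clearing denominators, $6r/(1-r^2)=\sin 1$ becomes $\sin 1\cdot r^2+6r-\sin 1=0$, whose positive root is $(\sqrt{9+\sin^2 1}-3)/\sin 1=\sin 1/(3+\sqrt{9+\sin^2 1})$ after rationalising; likewise $(1+\sin 1)r^2+5r-\sin 1=0$ gives $2\sin 1/(5+\sqrt{25+4\sin 1+4\sin^2 1})$, and $\sin 1\cdot r^2+4r-\sin 1=0$ gives $\sin 1/(2+\sqrt{4+\sin^2 1})$. This establishes the lower bounds $R_{\mathcal{S}_{sin}^{*}}(\mathcal{G}_i)\ge\rho_i$.

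For sharpness I would test the extremal functions $f_1,f_2,g_1$ from \eqref{f1} and \eqref{f2}. A logarithmic-derivative computation gives
\[
\frac{zf_1'(z)}{f_1(z)}=1+\frac{6z}{1-z^2},\qquad \frac{zf_2'(z)}{f_2(z)}=1+\frac{5z+z^2}{1-z^2},\qquad \frac{zg_1'(z)}{g_1(z)}=1+\frac{4z}{1-z^2},
\]
so at $z=\rho_i$ each value equals exactly $1+\sin 1=q_0(1)$. Since $1\in\partial\mathbb{D}$, this point lies on $\partial\Omega_s$; moreover, writing $\sin(x+iy)=\sin x\cosh y+i\cos x\sinh y$, one sees that a real value of $\sin z$ with $|z|<1$ forces $y=0$, whence $\Omega_s\cap\mathbb{R}=(1-\sin 1,\,1+\sin 1)$. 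Hence for any $r>\rho_i$ the dilation (say $r^{-1}f_1(rz)$ for $\mathcal{G}_1$) has $zf'(z)/f(z)=1+6r/(1-r^2)>1+\sin 1$ at $z=1$, a value outside $\overline{\Omega_s}$, so the subordination to $q_0$ fails; thus $R_{\mathcal{S}_{sin}^{*}}(\mathcal{G}_i)=\rho_i$.

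The main point requiring care — and the reason all three radii come out sharp here, unlike in the parabola, exponential and cardioid theorems where only lower bounds were obtained for $\mathcal{G}_2$ — is that for $\mathcal{S}_{sin}^{*}$ the inscribed disc $\{|w-1|<\sin 1\}$ is the \emph{largest} disc about $1$ inside $\Omega_s$ (because $1+\sin 1\in\partial\Omega_s$), while at the same time each of the estimates \eqref{disc1}--\eqref{disc3} is actually attained by the corresponding extremal function along the positive real axis; in particular for $\mathcal{G}_2$ the three pointwise bounds on $|zh_1'/h_1|$, $|zh_2'/h_2|$ and $|zp'/p|$ simultaneously become equalities for the choices defining $f_2$. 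Verifying this attainment, together with the boundary behaviour of $\Omega_s$ on the real axis, is the only step beyond routine algebra.
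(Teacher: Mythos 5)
Your proposal is correct and follows essentially the same route as the paper: the lower bounds come from the inclusion \eqref{sine} applied to the discs \eqref{disc1}--\eqref{disc3} centred at $1$, and sharpness comes from the extremal functions $f_1$, $f_2$, $g_1$ whose values of $zf'(z)/f(z)$ at real points of modulus $\rho$ land on $\partial\Omega_s$ (the paper tests $z=-\rho$ for (i) and (iii), giving the boundary point $1-\sin 1=q_0(-1)$, while you test $z=\rho$, giving $1+\sin 1=q_0(1)$; both are equivalent). Your extra justification that real values beyond $1+\sin 1$ lie outside $\overline{\Omega_s}$ makes explicit a step the paper leaves implicit, so no gap.
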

\begin{proof}
	\begin{enumerate}[label=(\roman*)]
		\item The function defined by $m(r)=(1-6r-r^2)(1-r^2)^{-1}, \ 0\leq r<1$ is a decreasing function. Let 	$\rho=R_{\mathcal{S}_{sin}^{*}}(\mathcal{G}_1)$ is the root of the equation $m(r)=1-\sin 1$.For $0<r\leq R_{\mathcal{S}_{sin}^{*}}(\mathcal{G}_1)$,  we have $m(r)\geq 1-\sin 1$. That is,  \[\frac{6r}{1-r^2}\leq\sin 1.\] For the class $\mathcal{G}_1$,  the centre of the disc is $1$,  therefore the disc obtained in \eqref{disc1} is contained in the region $\Omega_{s}$ bounded by the sine function,  by  Lemma~\ref{sine}.
		For the function $f_1$ defined in \eqref{f1},  at $z=-R_{\mathcal{S}_{sin}^{*}}(\mathcal{G}_1)=-\rho$,
		\[\left|\frac{zf_1'(z)}{f_1(z)}\right|=\left|\frac{1-6\rho-\rho^2}{1-\rho^2}\right|=1+\sin 1=q_0(1).\]
		
		\item The function $n(r):[0, 1)\longrightarrow \mathbb{R}$ defined  by $n(r)=(1-5r-2r^2)(1-r^2)^{-1}+1$ is a decreasing function. Let 	$\rho=R_{\mathcal{S}_{sin}^{*}}(\mathcal{G}_2)$ is the root of the equation $n(r)=1-\sin 1$. For $0<r\leq R_{\mathcal{S}_{sin}^{*}}(\mathcal{G}_2)$,  we have $n(r)\geq 1-\sin 1$. That is,  \[\frac{r(r+5)}{1-r^2}\leq \sin 1.\] For the class $\mathcal{G}_2$,  the centre of the disc is $1$,  therefore the disc obtained in \eqref{disc2} is contained in the region bounded by the sine function,  by  Lemma~\ref{sine}. For the function $f_2$ defined in \eqref{f2},  at $z=R_{\mathcal{S}_{sin}^{*}}(\mathcal{G}_1)=\rho$,
		\[\left|\frac{zf_2'(z)}{f_2(z)}\right|=\left|\frac{1+5\rho}{1-\rho^2}\right|=1+\sin 1=q_0(1).\]
		\item The function defined by $s(r)=1-(4r(1-r^2)^{-1}), \ 0\leq r<1$ is a decreasing function. Let $\rho=R_{\mathcal{S}_{sin}^{*}}(\mathcal{G}_3)$ is the root of the equation $s(r)=1-\sin 1$.  For $0<r\leq R_{\mathcal{S}_{sin}^{*}}(\mathcal{G}_3)$,  we have $s(r)\geq 1-\sin 1$. That is,  \[\frac{4r}{1-r^2}\leq \sin 1.\] For the class $\mathcal{G}_3$,  the centre of the disc is $1$,  therefore the disc obtained in \eqref{disc3} is contained in the region bounded by the sine function,  by  Lemma~\ref{sine}.  For the function $g_1$ defined in \eqref{f1},  at $z=-R_{\mathcal{S}_{sin}^{*}}(\mathcal{G}_3)=-\rho$,
	\[\left|\frac{zf_1'(z)}{f_1(z)}\right|=\left|\frac{1-4\rho-\rho^2}{1-\rho^2}\right|=1+\sin 1=q_0(1).\qedhere\]
	\end{enumerate}
\end{proof}

In 2015,  Raina and Sok\'{o}\l  \cite{MR3419845} introduced the class $\mathcal{S}_{\leftmoon}^{*}=\mathcal{S}^{*}(z+\sqrt{1+z^2})$. They showed that a function $f \in \mathcal{S}_{\leftmoon}^{*}$ if and only if $zf'(z)/f(z)$ belongs to a lune shaped region $\mathcal{L}:=\{w \in \mathbb{C}: \left|w^{2}-1\right| < 2 |w|\}$. Gandhi and Ravichandran  \cite[Lemma 2.1]{MR3718233} proved that
\begin{equation}\label{lune}
\{w \in \mathbb{C}: \left | w-a \right |< 1-|\sqrt{2}-a |\}\subseteq \{w \in \mathbb{C}: \left | w^{2}-1 \right |< 2\left|w\right| \}.
\end{equation}
\begin{theorem}
	The following results hold for the class $\mathcal{S}_{\leftmoon}^{*}$:
	\begin{enumerate}[label=(\roman*)]
		\item
		$R_{\mathcal{S}_{\leftmoon}^{*}}(\mathcal{G}_1)= (6-4\sqrt{2})/(2-\sqrt{2})\left(3+\sqrt{15-4\sqrt{2}}\right) \approx 0.0967$.
		\item
		$R_{\mathcal{S}_{\leftmoon}^{*}}(\mathcal{G}_2)\geq (16-10\sqrt{2})/(3-\sqrt{2})\left(5+\sqrt{57-20\sqrt{2}}\right) \approx 0.1131$.
		\item
		$R_{\mathcal{S}_{\leftmoon}^{*}}(\mathcal{G}_3)= (6-4\sqrt{2})/(2-\sqrt{2})\left(2+\sqrt{10-4\sqrt{2}}\right) \approx 0.1434$.
		
	\end{enumerate}
\end{theorem}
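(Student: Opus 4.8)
The plan is to run, for each of the three classes, the three-step scheme of the preceding theorems, now with the lune $\mathcal{L}=\{w:|w^{2}-1|<2|w|\}$ in place of the lemniscate. First I would recall from \eqref{disc1}, \eqref{disc2} and \eqref{disc3} that for $f\in\mathcal{G}_1$ (resp.\ $\mathcal{G}_2$, $\mathcal{G}_3$) and $|z|\le r$ the point $zf'(z)/f(z)$ lies in the disc centred at $1$ of radius $\varphi_1(r)=6r/(1-r^{2})$ (resp.\ $\varphi_2(r)=r(r+5)/(1-r^{2})$, $\varphi_3(r)=4r/(1-r^{2})$), each $\varphi_j$ being increasing on $[0,1)$ with $\varphi_j(0)=0$ and $\varphi_j(r)\to\infty$ as $r\to1^{-}$. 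Since the centre $a=1$ lies in $(\sqrt{2}-1,\sqrt{2}+1)$, Lemma~\ref{lune} applies with $1-|\sqrt{2}-1|=2-\sqrt{2}$, so whenever $\varphi_j(r)\le 2-\sqrt{2}$ the whole disc, hence $\{zf'(z)/f(z):|z|\le r\}$, lies in $\mathcal{L}$; that is, $r^{-1}f(rz)\in\mathcal{S}_{\leftmoon}^{*}$. Thus $R_{\mathcal{S}_{\leftmoon}^{*}}(\mathcal{G}_j)$ is at least the unique root $\rho_j\in(0,1)$ of $\varphi_j(r)=2-\sqrt{2}$.

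Second, I would solve those equations. For $\mathcal{G}_1$ the relation $6r/(1-r^{2})=2-\sqrt{2}$ is the quadratic $(2-\sqrt{2})r^{2}+6r-(2-\sqrt{2})=0$, with root in $(0,1)$ equal to $\bigl(\sqrt{15-4\sqrt{2}}-3\bigr)/(2-\sqrt{2})$; multiplying through by $\sqrt{15-4\sqrt{2}}+3$ and using $15-4\sqrt{2}-9=6-4\sqrt{2}$ puts it in the stated form $(6-4\sqrt{2})/\bigl((2-\sqrt{2})(3+\sqrt{15-4\sqrt{2}})\bigr)$. For $\mathcal{G}_2$ one obtains $(3-\sqrt{2})r^{2}+5r-(2-\sqrt{2})=0$ (here $(3-\sqrt{2})(2-\sqrt{2})=8-5\sqrt{2}$), and the admissible root rationalizes to $(16-10\sqrt{2})/\bigl((3-\sqrt{2})(5+\sqrt{57-20\sqrt{2}})\bigr)$. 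For $\mathcal{G}_3$, $4r/(1-r^{2})=2-\sqrt{2}$ gives $(2-\sqrt{2})r^{2}+4r-(2-\sqrt{2})=0$ with root $\bigl(\sqrt{10-4\sqrt{2}}-2\bigr)/(2-\sqrt{2})=(6-4\sqrt{2})/\bigl((2-\sqrt{2})(2+\sqrt{10-4\sqrt{2}})\bigr)$. These match the three quantities in the statement.

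Third, for the sharpness of (i) and (iii) I would use the extremal functions $f_1$ and $g_1$ of \eqref{f1}. Logarithmic differentiation gives $zf_1'(z)/f_1(z)=1+6z/(1-z^{2})$ and $zg_1'(z)/g_1(z)=1+4z/(1-z^{2})$, so at $z=-\rho_1$ and $z=-\rho_3$ respectively the relations $\varphi_1(\rho_1)=\varphi_3(\rho_3)=2-\sqrt{2}$ force both quotients to equal $1-(2-\sqrt{2})=\sqrt{2}-1$; as $|(\sqrt{2}-1)^{2}-1|=2\sqrt{2}-2=2|\sqrt{2}-1|$, this value lies on $\partial\mathcal{L}$, so no larger radius works. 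For $\mathcal{G}_2$ the estimate \eqref{disc2} is not attained by $f_2$: from $zf_2'(z)/f_2(z)=1+(z^{2}+5z)/(1-z^{2})$ one gets $|zf_2'(-\rho_2)/f_2(-\rho_2)-1|=(5\rho_2-\rho_2^{2})/(1-\rho_2^{2})$, strictly below the bound $\rho_2(\rho_2+5)/(1-\rho_2^{2})$ used in \eqref{disc2}, which is exactly why only the lower bound is asserted (cf.\ the discussion following \eqref{disc2}).

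The step most in need of care is confirming that the binding constraint is the \emph{left-hand} end of the disc rather than the right; everything else (monotonicity of the $\varphi_j$, the quadratic algebra) is routine. Since the right endpoint is $1+(2-\sqrt{2})=3-\sqrt{2}<1+\sqrt{2}$, the disc stays strictly inside the lune on that side, so it first meets $\partial\mathcal{L}$ at $\sqrt{2}-1$ — precisely the value produced by $f_1$ and $g_1$ at $z=-\rho_j$, which is what makes the sharpness computation close up. The analogous gap between the disc estimate \eqref{disc2} and the true image of $zf'/f$ for $f\in\mathcal{G}_2$ is the reason that radius is recorded only as a one-sided bound.
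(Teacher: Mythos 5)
Your proposal is correct and follows essentially the same route as the paper: the disc bounds \eqref{disc1}--\eqref{disc3} centred at $1$, Lemma~\ref{lune} with $a=1$ giving the threshold $2-\sqrt{2}$, the resulting quadratics for $\rho_j$, and sharpness via $f_1$ and $g_1$ at $z=-\rho$ (with only a lower bound for $\mathcal{G}_2$, exactly as in the paper). Your added remarks — that the extremal value $\sqrt{2}-1$ lies on $\partial\mathcal{L}$ and that $f_2$ does not attain the bound \eqref{disc2} at $z=-\rho_2$ — are just more explicit versions of the paper's sharpness check and its comment following \eqref{disc2}.
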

\begin{proof}
	\begin{enumerate}[label=(\roman*)]
		\item The function defined by $m(r)=(1-6r-r^2)(1-r^2)^{-1}, \ 0\leq r<1$ is a decreasing function. Let $\rho=R_{\mathcal{S}_{\leftmoon}^{*}}(\mathcal{G}_1)$ is the root of the equation $m(r)=\sqrt{2}-1$.  For $0<r\leq R_{\mathcal{S}_{\leftmoon}^{*}}(\mathcal{G}_1)$,  we have $m(r)\geq \sqrt{2}-1$. That is,  \[\frac{6r}{1-r^2}\leq2-\sqrt{2}.\] For the class $\mathcal{G}_1$,  the centre of the disc is $1$,  therefore the disc obtained in \eqref{disc1} is contained in the region bounded by the intersection of disks $\left\{w:|w - 1| < \sqrt{2}\right\}$ and $\left\{w:|w + 1| < \sqrt{2}\right\}$,  by  Lemma~\ref{lune}.
		For the function $f_1$ defined in \eqref{f1},  at $z=-R_{\mathcal{S}_{\leftmoon}^{*}}(\mathcal{G}_1)=-\rho$,
		\[\left|\left(\frac{zf_1'(z)}{f_1(z)}\right)^2-1\right|=\left|\left(\frac{1-6\rho-\rho^2}{1-\rho^2}\right)^2-1\right|=2\left|\frac{1-6\rho-\rho^2}{1-\rho^2}\right|.\]
		
		\item  The function $n(r):[0, 1)\longrightarrow \mathbb{R}$ defined  by $n(r)=(1-5r-2r^2)(1-r^2)^{-1}+1$ is a decreasing function. Let 	$\rho=R_{\mathcal{S}_{\leftmoon}^{*}}(\mathcal{G}_2)$ is the root of the equation $n(r)=\sqrt{2}-1$. For $0<r\leq R_{\mathcal{S}_{\leftmoon}^{*}}(\mathcal{G}_2)$,  we have $n(r)\geq \sqrt{2}-1$. That is,  \[\frac{r(r+5)}{1-r^2}\leq 2-\sqrt{2}.\] For the class $\mathcal{G}_2$,  the centre of the disc is $1$,  therefore the disc obtained in \eqref{disc2} is contained in the region bounded by the lune,  by  Lemma~\ref{lune}. This shows that $R_{\mathcal{S}_{\leftmoon}^{*}}(\mathcal{G}_2)$ is at least $\rho$.
		\item  The function $s(r):[0, 1)\longrightarrow \mathbb{R}$ defined by $s(r)=1-(4r(1-r^2)^{-1})$ is a decreasing function. Let 	$\rho=R_{\mathcal{S}_{\leftmoon}^{*}}(\mathcal{G}_1)$ is the root of the equation $s(r)=\sqrt{2}-1$.  For $0<r\leq R_{\mathcal{S}_{\leftmoon}^{*}}(\mathcal{G}_3)$,  we have $s(r)\geq \sqrt{2}-1$. That is,  \[\frac{4r}{1-r^2}\leq 2-\sqrt{2}.\] For the class $\mathcal{G}_3$,  the centre of the disc is $1$,  therefore the disc obtained in \eqref{disc3} is contained in the region bounded by the lune,  by  Lemma~\ref{lune}.  For the function $g_1$ defined in \eqref{f1},  at $z=-R_{\mathcal{S}_{\leftmoon}^{*}}(\mathcal{G}_3)=-\rho$,
		\[\left|\left(\frac{zg_1'(z)}{g_1(z)}\right)^2-1\right|=\left|\left(\frac{1-4\rho-\rho^2}{1-\rho^2}\right)^2-1\right|
=2\left|\frac{1-4\rho-\rho^2}{1-\rho^2}\right|.\qedhere\]
	\end{enumerate}
\end{proof}

In the next theorem,  we provide radii for starlike functions associated with a rational function. Kumar and Ravichandran  \cite{MR3496681} introduced the class of starlike functions associated with a rational function,  $\psi(z)=1+\left(z^2k+z^2/(k^2-kz)\right)\ \text{where}\  k=\sqrt{2}+1$,  defined by $\mathcal{S}_{R}^{*}=\mathcal{S}^{*}(\psi(z))$. For $2(\sqrt{2}-1) < a \leq \sqrt{2}$,  they had proved that
\begin{equation}\label{rational}
\{w \in \mathbb{C}: \left | w-a \right |< a-2(\sqrt{2}-1)\}\subseteq \psi(\mathbb{D}).
\end{equation}
\begin{theorem}
	The following results hold for the class $\mathcal{S}_{R}^{*}$:
	\begin{enumerate}[label=(\roman*)]
		\item
		$R_{\mathcal{S}_{R}^{*}}(\mathcal{G}_1)= (3-2\sqrt{2})/\left(3+\sqrt{26-12\sqrt{2}}\right) \approx 0.0285$.
		\item
		$R_{\mathcal{S}_{R}^{*}}(\mathcal{G}_2)\geq (20-14\sqrt{2})/(2-\sqrt{2})\left(5+\sqrt{105-56\sqrt{2}}\right) \approx 0.0340$.
		\item
		$R_{\mathcal{S}_{R}^{*}}(\mathcal{G}_3)= (3-2\sqrt{2})/\left(2+\sqrt{21-12\sqrt{2}}\right) \approx 0.0428$.
		
	\end{enumerate}

\end{theorem}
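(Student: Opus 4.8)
The plan is to argue exactly as in the three preceding theorems, feeding the disc estimates \eqref{disc1}, \eqref{disc2}, \eqref{disc3} into the inclusion \eqref{rational}. The centre of each of those discs is $a=1$, and since $2(\sqrt{2}-1)\approx 0.828<1\le\sqrt{2}$, the hypothesis of \eqref{rational} is met with $a=1$; because $1-2(\sqrt{2}-1)=3-2\sqrt{2}$, this yields $\{w:|w-1|<3-2\sqrt{2}\}\subseteq\psi(\mathbb{D})$. Hence, for $f$ in any of the three classes, $r^{-1}f(rz)\in\mathcal{S}_{R}^{*}$ as soon as the radius of the corresponding disc in \eqref{disc1}, \eqref{disc2}, or \eqref{disc3} is at most $3-2\sqrt{2}$, and the whole problem reduces to solving, for each class, one quadratic in $r$.

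For part~(i) I would take $m(r)=6r/(1-r^{2})$, increasing on $[0,1)$, and let $\rho=R_{\mathcal{S}_{R}^{*}}(\mathcal{G}_1)$ be the root of $m(r)=3-2\sqrt{2}$, i.e.\ of $(3-2\sqrt{2})r^{2}+6r-(3-2\sqrt{2})=0$; solving and rationalising, using $(3-2\sqrt{2})^{2}=17-12\sqrt{2}$, turns this root into $(3-2\sqrt{2})/(3+\sqrt{26-12\sqrt{2}})$. For $0<r\le\rho$ one has $m(r)\le 3-2\sqrt{2}$, so \eqref{disc1} and the inclusion above send $|z|\le r$ into $\psi(\mathbb{D})$ under $zf'(z)/f(z)$, giving $r^{-1}f(rz)\in\mathcal{S}_{R}^{*}$. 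Sharpness comes from the extremal $f_1$ of \eqref{f1}: a short computation gives $zf_1'(z)/f_1(z)=1+6z/(1-z^{2})$, so at $z=-\rho$ this equals $1-(3-2\sqrt{2})=2(\sqrt{2}-1)=\psi(-1)$, a boundary point of $\psi(\mathbb{D})$, and $\rho$ cannot be enlarged. Part~(iii) is identical with $m(r)$ replaced by $s(r)=4r/(1-r^{2})$: the root of $(3-2\sqrt{2})r^{2}+4r-(3-2\sqrt{2})=0$ is $(3-2\sqrt{2})/(2+\sqrt{21-12\sqrt{2}})$ after the same rationalisation, \eqref{disc3} supplies the inclusion, and the extremal $g_1$ of \eqref{f1}, for which $zg_1'(z)/g_1(z)=1+4z/(1-z^{2})$, again reaches $\psi(-1)=2(\sqrt{2}-1)$ at $z=-\rho$.

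For part~(ii) I would set $n(r)=r(r+5)/(1-r^{2})$ and take $\rho$ to be the root of $n(r)=3-2\sqrt{2}$, i.e.\ of $(4-2\sqrt{2})r^{2}+5r-(3-2\sqrt{2})=0$; with $(4-2\sqrt{2})(3-2\sqrt{2})=20-14\sqrt{2}$, solving and rationalising gives $\rho=(20-14\sqrt{2})/\bigl((2-\sqrt{2})(5+\sqrt{105-56\sqrt{2}})\bigr)$. Then \eqref{disc2} and \eqref{rational} yield $r^{-1}f(rz)\in\mathcal{S}_{R}^{*}$ for $0<r\le\rho$, hence $R_{\mathcal{S}_{R}^{*}}(\mathcal{G}_2)\ge\rho$. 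Only a lower bound is obtained because, as remarked after \eqref{disc2}, that estimate is not known to give the exact range of $zf'/f$: for the extremal $f_2$ of \eqref{f2} one has $zf_2'(z)/f_2(z)-1=(5z+z^{2})/(1-z^{2})$, whose modulus at $z=-\rho$ is $(5\rho-\rho^{2})/(1-\rho^{2})$, strictly below $3-2\sqrt{2}$, so $f_2$ never drives $zf_2'/f_2$ out to the touching point $\psi(-1)=2(\sqrt{2}-1)$ of the disc, and no matching upper bound is available. (By contrast, the expressions $6z/(1-z^{2})$ and $4z/(1-z^{2})$ in parts~(i),~(iii) are odd in $z$, which is precisely why the extremal values there do reach $\psi(-1)$.)

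Beyond bookkeeping there is no real obstacle: the only work is the elementary manipulation of the three quadratic roots into the radical forms displayed in the statement, plus checking that each $\rho$ lies in $(0,1)$ so that \eqref{shah}, and hence \eqref{disc1}--\eqref{disc3}, is being applied on a subdisc of $\mathbb{D}$.
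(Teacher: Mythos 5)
Your proposal is correct and follows essentially the same route as the paper: you combine the disc bounds \eqref{disc1}--\eqref{disc3}, centred at $1$, with the inclusion \eqref{rational} applied at $a=1$, solve the same quadratics for the radii, and verify sharpness in (i) and (iii) with $f_1$ and $g_1$ at $z=-\rho$ hitting the boundary value $2(\sqrt{2}-1)$, claiming only a lower bound for $\mathcal{G}_2$ exactly as the paper does.
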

\begin{proof}
	\begin{enumerate}[label=(\roman*)]
		\item The function defined by $m(r)=(1-6r-r^2)(1-r^2)^{-1}, \ 0\leq r<1$ is a decreasing function. Let $\rho=R_{\mathcal{S}_{R}^{*}}(\mathcal{G}_1)$ is the root of the equation $m(r)=2(\sqrt{2}-1)$.  For $0<r\leq R_{\mathcal{S}_{R}^{*}}(\mathcal{G}_1)$,  we have $m(r)\geq 2(\sqrt{2}-1)$. That is,  \[\frac{6r}{1-r^2}\leq1-2(\sqrt{2}-1).\] For the class $\mathcal{G}_1$,  the centre of the disc is $1$,  therefore the disc obtained in \eqref{disc1} is contained in the region bounded by the rational function,  by  Lemma~\ref{rational}.
		For the function $f_1$ defined in \eqref{f1},  at $z=-R_{\mathcal{S}_{R}^{*}}(\mathcal{G}_1)=-\rho$,
		\[\left|\frac{zf_1'(z)}{f_1(z)}\right|=\left|\frac{1-6\rho-\rho^2}{1-\rho^2}\right|=2(\sqrt{2}-1)=\psi(1).\]
		
		\item The function $n(r):[0, 1)\longrightarrow \mathbb{R}$ defined  by $n(r)=(1-5r-2r^2)(1-r^2)^{-1}+1$ is a decreasing function. Let $\rho=R_{\mathcal{S}_{R}^{*}}(\mathcal{G}_2)$ is the root of the equation $n(r)=2(\sqrt{2}-1)$. The function defined by \[n(r)=\frac{1-5r-2r^2}{1-r^2}+1\] is a decreasing function. For $0<r\leq R_{\mathcal{S}_{R}^{*}}(\mathcal{G}_2)$,  we have $n(r)\geq 2(\sqrt{2}-1)$. That is,  \[\frac{r(r+5)}{1-r^2}\leq 3-2\sqrt{2}.\] For the class $\mathcal{G}_2$,  the centre of the disc is $1$,  therefore the disc obtained in \eqref{disc2} is contained in the region bounded by the rational function,  by  Lemma~\ref{rational}. This shows that $R_{\mathcal{S}_{R}^{*}}(\mathcal{G}_2)$ is at least $\rho$.
		\item The function defined by $s(r)=1-(4r(1-r^2)^{-1}), \ 0\leq r<1$ is a decreasing function. Let $\rho=R_{\mathcal{S}_{R}^{*}}(\mathcal{G}_3)$ is the root of the equation $s(r)=2(\sqrt{2}-1)$.For $0<r\leq R_{\mathcal{S}_{R}^{*}}(\mathcal{G}_3)$,  we have $s(r)\geq 2(\sqrt{2}-1)$. That is,  \[\frac{4r}{1-r^2}\leq 2-\sqrt{2}.\] For the class $\mathcal{G}_3$,  the centre of the disc is $1$,  therefore the disc obtained in \eqref{disc3} is contained in the region bounded by the rational function,  by  Lemma~\ref{rational}.  For the function $g_1$ defined in \eqref{f1},  at $z=-R_{\mathcal{S}_{R}^{*}}(\mathcal{G}_3)=-\rho$,
		\[\left|\frac{zg_1'(z)}{g_1(z)}\right|=\left|\frac{1-4\rho-\rho^2}{1-\rho^2}\right|=2(\sqrt{2}-1)=\psi(1).\qedhere\]
	\end{enumerate}
\end{proof}
 Mendiratta \emph{et al.}\  \cite{MR3266533} studied the subclass of starlike function associated with left half of shifted lemniscate of Bernoulli,  given by $\left|(w-\sqrt{2})^{2}-1\right| < 1$. The class $\mathcal{S}_{RL}^{*}$ is defined as \[\mathcal{S}_{RL}^{*}=\mathcal{S}^{*}\left(\sqrt{2}-(\sqrt{2}-1)\sqrt{\frac{1-z}{1+2(\sqrt{2}-1)z}}  \right).\]
For $\sqrt{2}/3 \leq a < \sqrt{2}$,  they had proved the following inclusion:
\begin{equation}\label{reverse}
\{w \in \mathbb{C}: \left | w-a \right |< r_{RL}\}\subseteq \{w \in \mathbb{C}:  | (w-\sqrt{2})^{2}-1  |< 1 \},
\end{equation}
where $r_{RL}= \left(\left(1-\left(\sqrt{2}-a\right)^{2}\right)^{1/2}- \left(1-\left(\sqrt{2}-a\right)^{2}\right)\right)^{1/2}$. Using this result, we obtain $\mathcal{S}_{RL}^{*}$-radii of the classes $\mathcal{G}_1, \ \mathcal{G}_2, \ \mathcal{G}_3$ in the following theorem.
\begin{theorem}
	Let $\eta=\sqrt{2(\sqrt{2}-1)}-2(\sqrt{2}-1)$. Then the following sharp results hold for the class $\mathcal{S}_{RL}^{*}$.
	\begin{enumerate}[label=(\roman*)]
		\item
		$R_{\mathcal{S}_{RL}^{*}}(\mathcal{G}_1)$ is  the smallest positive root $(\approx 0.0475)$ in $(0, 1)$ of the equation $(36+2\eta)r^2-\eta r^2-\eta=0$.
		\item
		$R_{\mathcal{S}_{RL}^{*}}(\mathcal{G}_2)$ is the smallest positive root $(\approx 0.0567)$ in $(0, 1)$ of the equation $(1-\eta)r^4+10r^3+(25-2\eta) r^2-\eta=0$.
		\item
		$R_{\mathcal{S}_{RL}^{*}}(\mathcal{G}_3)$ is the smallest positive root $(\approx 0.0711)$ in $(0, 1)$ of the equation $\eta r^4-(16+2\eta)r^2+\eta=0$.
		
	\end{enumerate}
	
\end{theorem}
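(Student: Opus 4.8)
The plan is to run, for each of the three classes, the disk‑inclusion scheme already used for every Ma--Minda class in this section, and to treat sharpness separately. First I would record what \eqref{disc1}, \eqref{disc2} and \eqref{disc3} give: for $f\in\mathcal{G}_1$ (resp.\ $\mathcal{G}_2$, $\mathcal{G}_3$) the point $zf'(z)/f(z)$ lies, for $|z|\le r$, in the disk centred at $1$ of radius $\delta_1(r)=6r/(1-r^2)$ (resp.\ $\delta_2(r)=r(r+5)/(1-r^2)$, $\delta_3(r)=4r/(1-r^2)$). Next I would specialise \eqref{reverse} to $a=1$, which is legitimate since $\sqrt{2}/3\le 1<\sqrt{2}$; using $1-(\sqrt{2}-1)^2=2(\sqrt{2}-1)$ this turns the quantity $r_{RL}$ of \eqref{reverse} into
\[
r_{RL}=\Bigl(\bigl(1-(\sqrt{2}-1)^2\bigr)^{1/2}-\bigl(1-(\sqrt{2}-1)^2\bigr)\Bigr)^{1/2}=\Bigl(\sqrt{2(\sqrt{2}-1)}-2(\sqrt{2}-1)\Bigr)^{1/2}=\sqrt{\eta},
\]
so that $\{w:|w-1|<\sqrt{\eta}\}\subset\{w:|(w-\sqrt{2})^2-1|<1\}$, the region defining $\mathcal{S}_{RL}^{*}$. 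Consequently $r^{-1}f(rz)\in\mathcal{S}_{RL}^{*}$ as soon as $\delta_i(r)\le\sqrt{\eta}$.

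Each $\delta_i$ is continuous and strictly increasing on $[0,1)$ with $\delta_i(0)=0$ and $\delta_i(r)\to\infty$ as $r\to1^{-}$, and $0<\eta<1$; hence $\delta_i(r)\le\sqrt{\eta}$ holds exactly on $[0,\rho_i]$, where $\rho_i\in(0,1)$ is the unique root of $\delta_i(r)=\sqrt{\eta}$. Squaring this equation and clearing the denominator $(1-r^2)^2$ turns $\delta_1^2=\eta$, $\delta_2^2=\eta$, $\delta_3^2=\eta$ into the polynomial equations stated in (i)--(iii); among their roots the one we want is the smallest positive one, since that is where $\delta_i$ first attains $\sqrt{\eta}$ (for $\mathcal{G}_1$ and $\mathcal{G}_3$ the other positive root of the biquadratic exceeds $1$, and for $\mathcal{G}_2$ the quartic agrees in sign with $\delta_2^2-\eta$ on $(0,1)$, so has a single root there). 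This already gives $R_{\mathcal{S}_{RL}^{*}}(\mathcal{G}_i)\ge\rho_i$.

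For the reverse inequality I would evaluate the extremal functions $f_1$, $f_2$ and $g_1$ of \eqref{f1}--\eqref{f2}, for each of which $zf'(z)/f(z)=1+\delta_i(z)$ (the same rational function, now with complex argument $z$). This is where the main obstacle lies, and it genuinely makes this theorem harder than the analogous ones for the lemniscate, the exponential function, the cardioid or the lune: the disk $\{w:|w-1|<\sqrt{\eta}\}$ is internally tangent to the curve $\{w:|(w-\sqrt{2})^2-1|=1\}$ at a \emph{non-real} point, since the real trace of the loop through $1$ is the interval $(0,\sqrt{2})$, which would permit the larger radius $\sqrt{2}-1$. Thus the contact is \emph{not} realised at $z=\rho_i$, where $zf_i'(z)/f_i(z)=1+\sqrt{\eta}$ lies strictly inside the region, so one cannot conclude sharpness by a single real substitution as in the earlier proofs. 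The remaining work is to pin down the precise point $z$ on the circle $|z|=\rho_i$ at which $zf'(z)/f(z)$ meets $|(w-\sqrt{2})^2-1|=1$, i.e.\ to solve $|(1+\delta_i(z)-\sqrt{2})^2-1|=1$ jointly with $|z|=\rho_i$; and, should $f_1,f_2,g_1$ fail to touch that curve, to replace them by a suitably rotated product $z\,p_1(z)p_2(z)p(z)$ with $p_1,p_2,p\in\mathcal{P}$ engineered so that $zf'/f$ does reach the non-real contact point. Matching the range of $zf'/f$ over the class with the geometry of the reverse‑lemniscate region is the crux of the argument.
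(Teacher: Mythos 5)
Your containment half is exactly the paper's argument: both specialise \eqref{reverse} to $a=1$, observe that then $r_{RL}=\sqrt{\eta}$, and compare the radii $6r/(1-r^2)$, $r(r+5)/(1-r^2)$, $4r/(1-r^2)$ of the discs \eqref{disc1}--\eqref{disc3} with $\sqrt{\eta}$; squaring and using monotonicity gives $R_{\mathcal{S}_{RL}^{*}}(\mathcal{G}_i)\ge\rho_i$ with $\rho_i$ the smallest positive root of the stated polynomials (your derivation in fact yields $\eta r^4-(36+2\eta)r^2+\eta=0$ and $(1-\eta)r^4+10r^3+(25+2\eta)r^2-\eta=0$, so the equations printed in (i) and (ii) contain misprints).

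Where you stop --- sharpness --- is also where the paper stops. Its entire sharpness argument consists of the assertion that the radius is sharp for $f_1$, $f_2$, $g_1$ together with the closing remark that sharpness ``can be shown using the software \textit{Wolfram Mathematica}''; no contact point is exhibited. Your geometric objection is sound: the real points $1\pm\sqrt{\eta}$ lie strictly inside the loop (its real trace is $(0,\sqrt{2})$ and $\sqrt{\eta}<\sqrt{2}-1$), so the real substitutions $z=\pm\rho_i$ that settle sharpness in the other theorems prove nothing here, and the nearest boundary point to $1$ is indeed non-real. In fact for $f_1$ one has $zf_1'(z)/f_1(z)=1+6z/(1-z^2)$, and on $|z|=\rho_1$ this touches the circle $|w-1|=\sqrt{\eta}$ only at $z=\pm\rho_1$, i.e.\ at the two interior real points; so nothing displayed in the paper shows that the listed extremal functions leave the region at $\rho_i$, and the ``sharp'' claim rests entirely on the unverified software remark. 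Thus your proposal proves precisely what the paper proves rigorously (the lower bounds) and correctly identifies the missing step --- locating a non-real contact point, or constructing a member of the class whose $zf'/f$ actually reaches it --- which the paper does not supply either; this is consistent with the authors' own admission after \eqref{disc2} that the exact boundary of the value region is unknown.
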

\begin{proof}
	\begin{enumerate}[label=(\roman*)]
		\item The function defined by $m(r)=(6r(1-r^2)^{-1})+1, \ 0\leq r<1$ is an increasing function. Let $\rho=R_{\mathcal{S}_{RL}^{*}}(\mathcal{G}_3)$ is the root of the equation $m(r)=1+\sqrt{\eta}$. For $0<r\leq R_{\mathcal{S}_{RL}^{*}}(\mathcal{G}_1)$,  we have $m(r)\leq \sqrt{2}$. That is,  \[\left(\frac{6r}{1-r^2}\right)^2 \leq \eta=(m(\rho)-1)^2.\] For the class $\mathcal{G}_1$,  the centre of the disc is $1$,  therefore the disc obtained in \eqref{disc1} is contained in the region bounded by the reverse lemniscate,  by  Lemma~\ref{reverse}. This shows that $R_{\mathcal{S}_{RL}^{*}}(\mathcal{G}_1)$ is at least $\rho$. For the function $f_1$ defined in \eqref{f1},  the radius is sharp.
		
		\item The function defined by $n(r)=(5r+r^2)(1-r^2)^{-1}+1, \ 0\leq r<1$ is an increasing function.  Let $R_{\mathcal{S}_{RL}^{*}}(\mathcal{G}_2)$ is the root of the equation $n(r)=1+\sqrt{\eta}$. For $0<r\leq R_{\mathcal{S}_{RL}^{*}}(\mathcal{G}_2)$,  we have $n(r)\leq \sqrt{2}$. That is,  \[\left(\frac{5r+r^2}{1-r^2}\right)^2\leq \eta=(n(\rho)-1)^2.\] For the class $\mathcal{G}_2$,  the centre of the disc is $1$,  therefore the disc obtained in \eqref{disc2} is contained in the region bounded by reverse lemniscate,  by  Lemma~\ref{reverse}. This shows that $R_{\mathcal{S}_{RL}^{*}}(\mathcal{G}_2)$ is at least $\rho$. The obtained radius is sharp for the function $f_2$ defined in \eqref{f2}.

		\item The function defined by $s(r)=4r(1-r^2)^{-1}+1, \ 0\leq r<1$ is an increasing function. Let $R_{\mathcal{S}_{RL}^{*}}(\mathcal{G}_3)$ is the root of the equation $s(r)=1+\sqrt{\eta}$. For $0<r\leq R_{\mathcal{S}_{RL}^{*}}(\mathcal{G}_3)$,  we have $s(r)\leq \sqrt{2}$. That is,  \[\left(\frac{4r}{1-r^2}\right)^2 \leq \eta=(s(\rho)-1)^2.\] For the class $\mathcal{G}_3$,  the centre of the disc is $1$,  therefore the disc obtained in \eqref{disc3} is contained in the region bounded by reverse lemniscate,  by  Lemma~\ref{reverse}.  This shows that $R_{\mathcal{S}_{RL}^{*}}(\mathcal{G}_3)$ is at least $\rho$. The obtained radius is sharp for the function $g_1$ defined in \eqref{f1}.

	\end{enumerate}
The sharpness of the results can be shown using the software \textit{Wolfram Mathematica}.
\end{proof}
In 2020,  Wani and Swaminathan  \cite[Lemma 2.2]{wani2020starlike} introduced the class $\mathcal{S}_{Ne}^{*}=\mathcal{S}^{*}(1+z-(z^3/3))$ that maps the open unit disc $\mathbb{D}$ onto the interior of a two cusped kidney shaped curve $\Omega_{Ne}:=\{u+iv: ((u-1)^2+v^2-4/9)^3-4v^2/3<0\}$. For $1/3 < a \leq 1$, they had proved that
\begin{equation}\label{nephroid}
\{w \in \mathbb{C}: \left | w-a \right |< a-1/3\}\subseteq \Omega_{Ne}.
\end{equation}Our next theorem determines the $\mathcal{S}_{Ne}^{*}$-radii results for the classes $\mathcal{G}_1$, $\mathcal{G}_2$ and $\mathcal{G}_3$.

\begin{theorem}\label{thnephroid}
 The following sharp results hold for the class $\mathcal{S}_{Ne}^{*}$.
	\begin{enumerate}[label=(\roman*)]
		\item
		$R_{\mathcal{S}_{Ne}^{*}}(\mathcal{G}_1)= (\sqrt{85}-9)/2 \approx 0.1097$
		\item
		$R_{\mathcal{S}_{Ne}^{*}}(\mathcal{G}_2)=(\sqrt{265}-15)/10 \approx 0.1278$
		\item
		$R_{\mathcal{S}_{Ne}^{*}}(\mathcal{G}_3)=\sqrt{10}-3\approx 0.1622$
		
	\end{enumerate}
	
\end{theorem}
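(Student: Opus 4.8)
The plan is to run, for $\mathcal{S}_{Ne}^{*}$, the same argument already used for the classes treated earlier in this section, now with the inclusion \eqref{nephroid}. The crucial observation is that the common centre $w=1$ of the three discs \eqref{disc1}, \eqref{disc2}, \eqref{disc3} is the endpoint case $a=1$ of Lemma~\ref{nephroid}: the disc $\{w:|w-1|<2/3\}$ is the largest disc centred at $1$ lying in $\Omega_{Ne}$, its bounding circle passing through the two cusps $w=\varphi_{Ne}(-1)=1/3$ and $w=\varphi_{Ne}(1)=5/3$ of $\partial\Omega_{Ne}$, where $\varphi_{Ne}(z):=1+z-z^3/3$. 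Since $\Omega_{Ne}\cap\mathbb{R}=(1/3,5/3)$ (on $v=0$ the defining relation reduces to $(u-1)^2<4/9$), any real number outside $[1/3,5/3]$ lies outside $\Omega_{Ne}$; this is exactly what makes the three radii sharp.

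For part (i): $m(r)=(1-6r-r^2)(1-r^2)^{-1}$ is decreasing on $[0,1)$ and $m(r)=1/3$ is the equation $r^2+9r-1=0$, with root $\rho=(\sqrt{85}-9)/2$ in $(0,1)$. For $0<r\le\rho$ one has $6r/(1-r^2)\le 2/3$, so by \eqref{disc1} the image of $\{|z|\le r\}$ under $zf'/f$ lies in $\{w:|w-1|\le 2/3\}\subseteq\Omega_{Ne}$ by Lemma~\ref{nephroid}; since $\varphi_{Ne}$ maps $\mathbb{D}$ univalently onto $\Omega_{Ne}$ with $\varphi_{Ne}(0)=1$, this forces $zf'(z)/f(z)\prec\varphi_{Ne}$ on $\{|z|<\rho\}$, i.e.\ $r^{-1}f(rz)\in\mathcal{S}_{Ne}^{*}$. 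For sharpness, the extremal $f_1$ of \eqref{f1} satisfies $zf_1'(z)/f_1(z)=(1+6z-z^2)/(1-z^2)$, which at $z=-r$ equals $m(r)$; for $r>\rho$ this is a real number $<1/3$, hence outside $\Omega_{Ne}$, so the radius cannot be enlarged.

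Parts (ii) and (iii) go the same way. For $\mathcal{G}_2$, demanding that the radius $r(r+5)/(1-r^2)$ in \eqref{disc2} be $\le 2/3$ amounts to $5r^2+15r-2\le 0$, with relevant root $\rho=(\sqrt{265}-15)/10$, and the containment argument proceeds as in (i). Sharpness---only a lower bound in the cardioid analogue---now genuinely holds: $zf_2'(z)/f_2(z)=(1+5z)/(1-z^2)$, the real-valued function $r\mapsto(1+5r)/(1-r^2)$ is increasing, and at $z=\rho$ it equals $1+\rho(\rho+5)/(1-\rho^2)=5/3=\varphi_{Ne}(1)\in\partial\Omega_{Ne}$, so for $r>\rho$ the value at $z=r$ exceeds $5/3$ and $r^{-1}f_2(rz)\notin\mathcal{S}_{Ne}^{*}$. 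For $\mathcal{G}_3$, $4r/(1-r^2)\le 2/3$ reduces to $r^2+6r-1\le 0$ with root $\rho=\sqrt{10}-3$; using the factorisation $1+5z+3z^2-z^3=(1+z)(1+4z-z^2)$ one gets $zg_1'(z)/g_1(z)=(1+4z-z^2)/(1-z^2)$, equal to $1/3$ at $z=-\rho$, which yields sharpness by the same reasoning as in (i).

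I do not expect a genuine obstacle: the inclusion into $\mathcal{S}_{Ne}^{*}$ is immediate from \eqref{disc1}--\eqref{disc3} and Lemma~\ref{nephroid} once the elementary quadratics above are solved. The only step needing a little care is the sharpness bookkeeping---reducing $zf_1'/f_1$, $zf_2'/f_2$, $zg_1'/g_1$ to their simple rational forms and checking that the resulting real boundary values are precisely the cusps $1/3$ and $5/3$ of the nephroid $((u-1)^2+v^2-4/9)^3-4v^2/3=0$, so that any radius past $\rho$ pushes $zf'/f$ out of $\Omega_{Ne}$ along the real axis.
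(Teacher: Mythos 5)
Your proposal is correct and follows essentially the same route as the paper: the discs \eqref{disc1}--\eqref{disc3} centred at $1$, the inclusion Lemma~\ref{nephroid} with $a=1$ (radius $2/3$), the same quadratics $r^2+9r-1=0$, $5r^2+15r-2=0$, $r^2+6r-1=0$, and sharpness via the extremal functions $f_1$, $f_2$, $g_1$ hitting the boundary values $1/3$ or $5/3$ of $\Omega_{Ne}$. Your sharpness bookkeeping is in fact a bit more careful than the paper's (which evaluates at $z=+\rho$ but records the value $1/3$ rather than $5/3$), and your observation that $\Omega_{Ne}\cap\mathbb{R}=(1/3,5/3)$ cleanly justifies why radii cannot be enlarged, including the genuinely sharp case (ii) for $\mathcal{G}_2$.
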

\begin{proof}
\begin{enumerate}[label=(\roman*)]
	
\item The function defined by $m(r)=(1-6r-r^2)(1-r^2)^{-1}, \ 0\leq r<1$ is a decreasing function. Let $\rho= R_{\mathcal{S}_{Ne}^{*}}(\mathcal{G}_1)$ is the root of the equation $m(r)=1/3$. For $0<r\leq R_{\mathcal{S}_{Ne}^{*}}(\mathcal{G}_1)$,  we have $m(r)\geq 1/3$. That is,  \[\frac{6r}{1-r^2}\leq1-\frac{1}{3}.\] For the class $\mathcal{G}_1$,  the centre of the disc is $1$,  therefore the disc obtained in \eqref{disc1} is contained in the region bounded by the nephroid,  by  Lemma~\ref{nephroid}.
For the function $f_1$ defined in \eqref{f1},  at $z=R_{\mathcal{S}_{Ne}^{*}}(\mathcal{G}_1)=\rho$,
\[\left|\frac{zf_1'(z)}{f_1(z)}\right|=\left|\frac{1+6\rho-\rho^2}{1-\rho^2}\right|=\frac{1}{3}\in \partial\Omega_{Ne}\]where $\partial\Omega_{Ne}$ denotes the boundary of nephroid domain.

\item The function $n(r):[0, 1)\longrightarrow \mathbb{R}$ defined  by $n(r)=(1-5r-2r^2)(1-r^2)^{-1}+1$ is a decreasing function. Let $\rho=R_{\mathcal{S}_{Ne}^{*}}(\mathcal{G}_2)$ is the root of the equation $n(r)=1/3$. For $0<r\leq R_{\mathcal{S}_{Ne}^{*}}(\mathcal{G}_2)$,  we have $n(r)\geq 1/3$. That is,  \[\frac{r(r+5)}{1-r^2}\leq \frac{2}{3}.\]For the class $\mathcal{G}_2$,  the centre of the disc is $1$,  therefore the disc obtained in \eqref{disc2} is contained in the region bounded by the nephroid,  by  Lemma~\ref{nephroid}. For the function $f_2$ defined in \eqref{f2},  at $z=R_{\mathcal{S}_{Ne}^{*}}(\mathcal{G}_2)=\rho$,
\[\left|\frac{zf_2'(z)}{f_2(z)}\right|=\left|\frac{1+5\rho}{1-\rho^2}\right|=\frac{5}{3}\in \partial\Omega_{Ne}.\]
\item  The function defined by $s(r)=1-(4r(1-r^2)^{-1}), \ 0\leq r<1$ is a decreasing function. Let $\rho=R_{\mathcal{S}_{Ne}^{*}}(\mathcal{G}_3)$ is the root of the equation $s(r)=1/3$. For $0<r\leq R_{\mathcal{S}_{Ne}^{*}}(\mathcal{G}_3)$,  we have $s(r)\geq 1/3$. That is,  \[\frac{4r}{1-r^2}\leq \frac{2}{3}.\] For the class $\mathcal{G}_3$,  the centre of the disc is $1$,  therefore the disc obtained in \eqref{disc3} is contained in the region bounded by the nephroid,  by  Lemma~\ref{nephroid}.  For the function $g_1$ defined in \eqref{f1},  at $z=R_{\mathcal{S}_{Ne}^{*}}(\mathcal{G}_3)=\rho$,
\[\left|\frac{zg_1'(z)}{g_1(z)}\right|=\left|\frac{1+4\rho-\rho^2}{1-\rho^2}\right|=\frac{1}{3}\in \partial\Omega_{Ne}.\qedhere\]
\end{enumerate}
\end{proof}

In 2020,  Goel and Kumar  \cite{MR4044913} introduced the class $\mathcal{S}_{SG}^{*}$ that maps the open unit disc $\mathbb{D}$ onto a domain $\Delta_{SG}:=\{w\in \mathbb{C}:|\log w/(2-w)|<1\}$ and $\mathcal{S}_{SG}^{*}=\mathcal{S}^{*}(2/(1+e^{-z}))$. For $2/(1+e)<a<2e/(1+e)$,  they had proved the following inclusion: \begin{equation}\label{sigmoid}
	\{w\in \mathbb{C}:|w-a|<r_{SG}\}\subset \Delta_{SG},
\end{equation} provided $r_{SG}=((e-1)/(e+1))-|a-1|$. Theorem \ref{thsigmoid}  provides $\mathcal{S}_{SG}^{*}$-radii  of the classes  $\mathcal{G}_1, \ \mathcal{G}_2, \ \mathcal{G}_3$.

\begin{theorem}\label{thsigmoid}
	The following sharp results hold for the class $\mathcal{S}_{SG}^{*}$.
	\begin{enumerate}[label=(\roman*)]
		\item
		$R_{\mathcal{S}_{SG}^{*}}(\mathcal{G}_1)= 2(e-1)/\left((6+6e)+\sqrt{40+64e+40e^2}\right) \approx 0.0766$
		\item
		$R_{\mathcal{S}_{SG}^{*}}(\mathcal{G}_2)=2(e-1)/\left((5+5e)+\sqrt{25+42e+33e^2}\right) \approx 0.0901$
		\item
		$R_{\mathcal{S}_{SG}^{*}}(\mathcal{G}_3)=2(e-1)/\left((4+4e)+\sqrt{20+24e+20e^2}\right)\approx 0.1140$
		
	\end{enumerate}
	
\end{theorem}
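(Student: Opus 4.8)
The plan is to follow, step for step, the pattern of the preceding radius theorems. For $f$ in $\mathcal{G}_1$, $\mathcal{G}_2$, $\mathcal{G}_3$ the estimates \eqref{disc1}, \eqref{disc2}, \eqref{disc3} confine $zf'(z)/f(z)$, for $|z|\le r$, to a disc centred at $1$ with radius $6r/(1-r^2)$, $r(r+5)/(1-r^2)$, $4r/(1-r^2)$ respectively. Since the centre $a=1$ satisfies $2/(1+e)<1<2e/(1+e)$, I would invoke Lemma~\ref{sigmoid} with $r_{SG}=(e-1)/(e+1)-|1-1|=(e-1)/(e+1)$: the image set lies in $\Delta_{SG}$ as soon as the disc radius is at most $(e-1)/(e+1)$, i.e.\ as soon as $r$ does not exceed the corresponding threshold, so each part reduces to solving one equation.

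For $\mathcal{G}_1$, the map $r\mapsto 6r/(1-r^2)$ is increasing on $[0,1)$, so the threshold $\rho$ is the unique root in $(0,1)$ of $6r/(1-r^2)=(e-1)/(e+1)$, equivalently of $(e-1)r^2+6(e+1)r-(e-1)=0$; rationalizing the positive root gives $\rho=2(e-1)/\bigl((6+6e)+\sqrt{36(e+1)^2+4(e-1)^2}\bigr)$, and $36(e+1)^2+4(e-1)^2=40e^2+64e+40$, which is the claimed value. The identical computation with $6r$ replaced by $4r$ settles $\mathcal{G}_3$ (quadratic $(e-1)r^2+4(e+1)r-(e-1)=0$), and with $6r$ replaced by $r(r+5)$ it settles $\mathcal{G}_2$ (the quadratic becomes $2e\,r^2+5(e+1)r-(e-1)=0$, whose radicand after rationalization is $25(e+1)^2+8e(e-1)=33e^2+42e+25$). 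These match the three formulas in the statement.

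For sharpness I would use the identity $2e/(1+e)-1=(e-1)/(e+1)$ together with the explicit forms $zf_1'(z)/f_1(z)=1+6z/(1-z^2)$, $zg_1'(z)/g_1(z)=1+4z/(1-z^2)$ and $zf_2'(z)/f_2(z)=(1+5z)/(1-z^2)=1+(5z+z^2)/(1-z^2)$ for the extremal functions of \eqref{f1} and \eqref{f2}: evaluating each on the positive real axis at the corresponding threshold yields the value $1+(e-1)/(e+1)=2e/(1+e)$. Since $w=2e/(1+e)$ gives $2-w=2/(1+e)$, whence $|\log(w/(2-w))|=|\log e|=1$, this value lies on $\partial\Delta_{SG}$, so none of the radii can be enlarged. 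The only steps needing care are verifying that $a=1$ falls in the admissible window of Lemma~\ref{sigmoid} and the rationalization of the quadratic roots into the radical form quoted; neither is a genuine obstacle, so the argument is essentially bookkeeping once \eqref{disc1}--\eqref{disc3} are available.
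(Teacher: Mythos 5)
Your proposal is correct and follows essentially the same route as the paper: the discs \eqref{disc1}--\eqref{disc3} centred at $1$, Lemma~\ref{sigmoid} with $a=1$ and $r_{SG}=(e-1)/(e+1)$, the resulting quadratics whose positive roots rationalize to the stated radii, and sharpness via $f_1$, $f_2$, $g_1$ at the positive threshold point where $zf'(z)/f(z)=2e/(1+e)\in\partial\Delta_{SG}$. Your explicit verification that $2e/(1+e)$ lies on the boundary is the same sharpness computation the paper records, so no further comment is needed.
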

\begin{proof}
	\begin{enumerate}[label=(\roman*)]
		
		\item  The function defined by $m(r)=(1-6r-r^2)(1-r^2)^{-1}, \ 0\leq r<1$ is a decreasing function. Let $\rho=R_{\mathcal{S}_{SG}^{*}}(\mathcal{G}_1)$ is the root of the equation $m(r)=2/(1+e)$. For $0<r\leq R_{\mathcal{S}_{SG}^{*}}(\mathcal{G}_1)$,  we have $m(r)\geq 2/(1+e)$. That is,  \[\frac{6r}{1-r^2}\leq\frac{e-1}{e+1}.\] For the class $\mathcal{G}_1$,  the centre of the disc is $1$,  therefore the disc obtained in \eqref{disc1} is contained in the region bounded by the modified sigmoid,  by  Lemma~\ref{sigmoid}.
		For the function $f_1$ defined in \eqref{f1},  at $z=R_{\mathcal{S}_{SG}^{*}}(\mathcal{G}_1)=\rho$,
		\[\left|\log\frac{zf_1'(z)/f_1(z)}{2-(zf_1'(z)/f_1(z))}\right|=\left|\log \frac{(1+6\rho-\rho^2)/(1-\rho^2)}{2-((1+6\rho-\rho^2)/(1-\rho^2))}\right|=1.\]
		
		\item The function $n(r):[0, 1)\longrightarrow \mathbb{R}$ defined  by $n(r)=(1-5r-2r^2)(1-r^2)^{-1}+1$ is a decreasing function. Let $\rho=R_{\mathcal{S}_{SG}^{*}}(\mathcal{G}_2)$ is the root of the equation $n(r)=2/(1+e)$. For $0<r\leq R_{\mathcal{S}_{SG}^{*}}(\mathcal{G}_2)$,  we have $n(r)\geq 2/1+e$. That is,  \[\frac{r(r+5)}{1-r^2}\leq \frac{e-1}{e+1}.\]For the class $\mathcal{G}_2$,  the centre of the disc is $1$,  therefore the disc obtained in \eqref{disc2} is contained in the region bounded by the modified sigmoid,  by  Lemma~\ref{sigmoid}. For the function $f_2$ defined in \eqref{f2},  at $z=R_{\mathcal{S}_{SG}^{*}}(\mathcal{G}_2)=\rho$,
		\[\left|\log\frac{zf_2'(z)/f_2(z)}{2-(zf_2'(z)/f_2(z))}\right|=\left|\log \frac{(1+5\rho)/(1-\rho^2)}{2-((1+5\rho)/(1-\rho^2))}\right|=1.\]
		\item The function defined by $s(r)=1-(4r(1-r^2)^{-1}), \ 0\leq r<1$ is a decreasing function. Let $\rho=R_{\mathcal{S}_{SG}^{*}}(\mathcal{G}_3)$ is the root of the equation $s(r)=2/(1+e)$. For $0<r\leq R_{\mathcal{S}_{SG}^{*}}(\mathcal{G}_3)$,  we have $s(r)\geq 2/(1+e)$. That is,  \[\frac{4r}{1-r^2}\leq \frac{e-1}{e+1}.\] For the class $\mathcal{G}_3$,  the centre of the disc is $1$,  therefore the disc obtained in \eqref{disc3} is contained in the region bounded by the modified sigmoid,  by  Lemma~\ref{sigmoid}.  For the function $g_1$ defined in \eqref{f1},  at $z=R_{\mathcal{S}_{SG}^{*}}(\mathcal{G}_3)=\rho$,
\[\left|\log\frac{zg_1'(z)/g_1(z)}{2-(zg_1'(z)/g_1(z))}\right|=\left|\log \frac{(1+4\rho-\rho^2)/(1-\rho^2)}{2-((1+4\rho-\rho^2)/(1-\rho^2))}\right|=1.
\qedhere\]
\end{enumerate}
\end{proof}

Though we have no proof,  we believe that the sharp radii  for the class $\mathcal{G}_2$ are the following:
\begin{enumerate}

\item 	$R_{\mathcal{S}_{p}}(\mathcal{G}_2)= 5-2\sqrt{6} \approx 0.1010$.
			\item
			$R_{\mathcal{S}_{e}^{*}}(\mathcal{G}_2)= 2(e^2+e+2)/(2+e)\left(5e+\sqrt{-8+4e+29e^{2} }\right)\approx 0.1276$.
			\item
			$R_{\mathcal{S}_{c}^{*}}(\mathcal{G}_2)= (15-\sqrt{217})/2 \approx 0.1345$.
			\item
			$R_{\mathcal{S}_{\leftmoon}^{*}}(\mathcal{G}_2)= (6\sqrt{2}-8)/(\sqrt{2}-1)\left(5+\sqrt{41-12\sqrt{2}}\right) \approx 0.1183$.
			\item
			$R_{\mathcal{S}_{R}^{*}}(\mathcal{G}_2)= (10\sqrt{2}-14)/(\sqrt{2}-1)\left(5+\sqrt{81-40\sqrt{2}}\right) \approx 0.0345$.
			\end{enumerate}
Our estimate for these radii are respectively 0.0972,  0.1213, 0.1279, 0.1131, 0.0340  and are  very much close to the above mentioned values.

\renewcommand{\MR}[1]{}


\begin{thebibliography}{10}

\bibitem{MR2879136}
R. M. Ali,  N. K. Jain\ and\ V. Ravichandran,  \emph{Radii of
  starlikeness associated with the lemniscate of {B}ernoulli and the left-half
  plane},  Appl. Math. Comput. \textbf{218} (2012),  no.~11,  6557--6565.
  \MR{2879136}

\bibitem{MR2989285}
R. M. Ali,  N. K. Jain\ and\ V. Ravichandran,  \emph{On the radius
  constants for classes of analytic functions},  Bull. Malays. Math. Sci. Soc.
  (2) \textbf{36} (2013),  no.~1,  23--38. \MR{2989285}

\bibitem{MR2727984}
M. K. Aouf, J. Dziok\ and\ J. Sok\'{o}\l,  \emph{On a subclass of
  strongly starlike functions},  Appl. Math. Lett. \textbf{24} (2011),  no.~1,
  27--32. \MR{2727984}

\bibitem{MR264047}	W. M. Causey\ and\ E. P. Merkes, \emph{Radii of starlikeness of certain classes of analytic functions}, J. Math. Anal. Appl. {\bf 31} (1970), 579--586.

\bibitem{MR3913990}
N. E. Cho,  V.Kumar,  S. S. Kumar\ and\ V. Ravichandran,  \emph{Radius problems
  for starlike functions associated with the sine function},  Bull. Iranian
  Math. Soc. \textbf{45} (2019),  no.~1,  213--232. \MR{3913990}

\bibitem{MR708494}
P. L. Duren,  \emph{Univalent Functions},  GTM,  vol. 259,  Springer-Verlag,  New
  York,  1983. \MR{708494}

\bibitem{el2020starlikeness}
A. S. A. El-Faqeer, M. H. Mohd, V. Ravichandran and\ S. Supramaniam,  \emph{Starlikeness of certain analytic functions},  arXiv
  preprint arXiv:2006.11734 (2020).

\bibitem{MR3718233}
S. Gandhi\ and\ V. Ravichandran,  \emph{Starlike functions associated with a
  lune},  Asian-Eur. J. Math. \textbf{10} (2017),  no.~4,  1750064,  12.
  \MR{3718233}

\bibitem{MR1460173}
A. Gangadharan,  V. Ravichandran\ and\ T. N. Shanmugam,  \emph{Radii of convexity
  and strong starlikeness for some classes of analytic functions},  J. Math.
  Anal. Appl. \textbf{211} (1997),  no.~1,  301--313. \MR{1460173}

\bibitem{MR4044913}
P. Goel\ and\ S. Sivaprasad Kumar,  \emph{Certain class of starlike
  functions associated with modified sigmoid function},  Bull. Malays. Math.
  Sci. Soc. \textbf{43} (2020),  no.~1,  957--991. \MR{4044913}

\bibitem{MR704184}
A. W. Goodman,  \emph{Univalent Functions. {V}ol. {II}},  Mariner,  Tampa,  FL,  1983. \MR{704184}

\bibitem{MR267103}
W. Janowski,  \emph{Extremal problems for a family of functions with positive
  real part and for some related families},  Ann. Polon. Math. \textbf{23}
  (1970/71),  159--177. \MR{267103}

\bibitem{MR328059}
W. Janowski,  \emph{Some extremal problems for certain families of analytic
  functions. {I}},  Ann. Polon. Math. \textbf{28} (1973),  297--326. \MR{328059}

\bibitem{kanaga2020starlikeness}
R. Kanaga\ and\ V. Ravichandran,  \emph{Starlikeness for certain close-to-star
  functions}, Hacet. J. Math. Stat. to appear. (arXiv:2003.05628)

\bibitem{khatter2020radius}
K. Khatter,  S. K. Lee\  and\ V. Ravichandran,  \emph{Radius of starlikeness
  for classes of analytic functions},  arXiv preprint arXiv:2006.11744 (2020).

\bibitem{MR3220311}
B. Kowalczyk\ and\ A. Lecko,  \emph{Radius problem in classes of
  polynomial close-to-convex functions {I}},  Bull. Soc. Sci. Lett. \L
  \'{o}d\'{z} S\'{e}r. Rech. D\'{e}form. \textbf{63} (2013),  no.~1,  65--77.
  \MR{3220311}

\bibitem{MR3222198}
B. Kowalczyk\ and\ A. Lecko,  \emph{Radius problem in classes of polynomial close-to-convex
  functions {II}. {P}artial solutions},  Bull. Soc. Sci. Lett. \L \'{o}d\'{z}
  S\'{e}r. Rech. D\'{e}form. \textbf{63} (2013),  no.~2,  23--34. \MR{3222198}

\bibitem{MR3837436}
B. Kowalczyk, A. Lecko\ and\ B. \'{S}miarowska,  \emph{On some
  coefficient inequality in the subclass of close-to-convex functions},  Bull.
  Soc. Sci. Lett. \L \'{o}d\'{z} S\'{e}r. Rech. D\'{e}form. \textbf{67} (2017),
  no.~1,  79--90. \MR{3837436}

\bibitem{MR3496681}
S. Kumar\ and\ V. Ravichandran,  \emph{A subclass of starlike functions
  associated with a rational function},  Southeast Asian Bull. Math. \textbf{40}
  (2016),  no.~2,  199--212. \MR{3496681}

\bibitem{MR3947299}
A. Lecko\ and\ Y. J. Sim,  \emph{Coefficient problems in the subclasses of
  close-to-star functions},  Results Math. \textbf{74} (2019),  no.~3,  Paper No.
  104,  14. \MR{3947299}

\bibitem{MR1182182}
W. C. Ma\ and\ D. Minda,  \emph{Uniformly convex functions},  Ann. Polon.
  Math. \textbf{57} (1992),  no.~2,  165--175. \MR{1182182}

\bibitem{MR1343506}
W. C. Ma\ and\ D. Minda,  \emph{A unified treatment of some special classes of univalent
  functions},  Proceedings of the {C}onference on {C}omplex {A}nalysis
  ({T}ianjin,  1992),  Conf. Proc. Lecture Notes Anal.,  I,  Int. Press,  Cambridge,
  MA,  1994,  pp.~157--169. \MR{1343506}

\bibitem{MR150282}
T. H. MacGregor,  \emph{The radius of convexity for starlike functions of
  order {${1\over 2}$}},  Proc. Amer. Math. Soc. \textbf{14} (1963),  71--76.
  \MR{150282}

\bibitem{MR148891}
T. H. MacGregor,  \emph{The radius of univalence of certain analytic functions},  Proc.
  Amer. Math. Soc. \textbf{14} (1963),  514--520. \MR{148891}

\bibitem{MR158985}
T. H. MacGregor,  \emph{A class of univalent functions},  Proc. Amer. Math. Soc.
  \textbf{15} (1964),  311--317. \MR{158985}

\bibitem{MR3266533}
R. Mendiratta,  S. Nagpal\ and\ V. Ravichandran,  \emph{A subclass of
  starlike functions associated with left-half of the lemniscate of
  {B}ernoulli},  Internat. J. Math. \textbf{25} (2014),  no.~9,  1450090,  17.
  \MR{3266533}

\bibitem{MR3394060}
R. Mendiratta,  S. Nagpal\ and\ V. Ravichandran,  \emph{On a subclass of strongly starlike functions associated with
  exponential function},  Bull. Malays. Math. Sci. Soc. \textbf{38} (2015),
  no.~1,  365--386. \MR{3394060}

\bibitem{MR1473960}
E. Paprocki\ and\ J. Sok\'{o}\l,  \emph{The extremal problems in some
  subclass of strongly starlike functions},  Zeszyty Nauk. Politech.
  Rzeszowskiej Mat. (1996),  no.~20,  89--94. \MR{1473960}


\bibitem{MR3419845}
R. K. Raina\ and\ J. Sok\'{o}\l,  \emph{Some properties related to
  a certain class of starlike functions},  C. R. Math. Acad. Sci. Paris
  \textbf{353} (2015),  no.~11,  973--978. \MR{3419845}

\bibitem{MR257341}
J. S. Ratti,  \emph{The radius of convexity of certain analytic functions},
  Indian J. Pure Appl. Math. \textbf{1} (1970),  no.~1,  30--36. \MR{257341}

\bibitem{MR236373}
J. S. Ratti,  \emph{The radius of univalence of certain analytic
  functions},  Math. Z. \textbf{107} (1968),  241--248. \MR{236373}

\bibitem{MR1624955}
V. Ravichandran,  F. R\o nning\ and\ T. N. Shanmugam,  \emph{Radius of convexity
  and radius of starlikeness for some classes of analytic functions},  Complex
  Variables Theory Appl. \textbf{33} (1997),  no.~1-4,  265--280. \MR{1624955}

\bibitem{sebastian2020radius}
A. Sebastian\ and\ V. Ravichandran,  \emph{Radius of starlikeness of certain
  analytic functions},  Math. Slovaca to appear (arXiv:2001.06999).

\bibitem{MR313493}
G. M. Shah,  \emph{On the univalence of some analytic functions},  Pacific J.
  Math. \textbf{43} (1972),  239--250. \MR{313493}

\bibitem{MR994916}
T. N. Shanmugam,  \emph{Convolution and differential subordination},  Internat.
  J. Math. Math. Sci. \textbf{12} (1989),  no.~2,  333--340. \MR{994916}

\bibitem{MR1415180}
T. N. Shanmugam\ and\ V. Ravichandran,  \emph{Certain properties of uniformly
  convex functions},  Computational methods and function theory 1994 ({P}enang),
  Ser. Approx. Decompos.,  vol.~5,  World Sci. Publ.,  River Edge,  NJ,  1995,
  pp.~319--324. \MR{1415180}

\bibitem{MR3536076}
K. Sharma,  N. K. Jain\ and\ V. Ravichandran,  \emph{Starlike functions
  associated with a cardioid},  Afr. Mat. \textbf{27} (2016),  no.~5-6,  923--939.
  \MR{3536076}

\bibitem{MR1473947}
J. Sok\'{o}\l\ and\ J. Stankiewicz,  \emph{Radius of convexity of some
  subclasses of strongly starlike functions},  Zeszyty Nauk. Politech.
  Rzeszowskiej Mat. (1996),  no.~19,  101--105. \MR{1473947}

\bibitem{MR1232424}
H. M. Srivastava\ and\ S. Owa (eds.),  \emph{Current topics in analytic
  function theory},  World Scientific Publishing Co.,  Inc.,  River Edge,  NJ,
  1992. \MR{1232424}

\bibitem{wani2020starlike}
L. A. Wani\ and\ A. Swaminathan,  \emph{Starlike and convex functions
  associated with a nephroid domain},  Bull.  Malays.  Math.
  Sci.  Soc. to appear  (2020).

\end{thebibliography}
\end{document}